\newtheorem{thm}{Theorem}[section]
\newtheorem{cor}[thm]{Corollary}
\newtheorem{prop}[thm]{Proposition}
\newtheorem{lem}[thm]{Lemma}
\newtheorem{quest}[thm]{Question}
\theoremstyle{definition}
\newtheorem{defn}[thm]{Definition}
\newtheorem{exmp}[thm]{Example}
\theoremstyle{remark}
\newcommand{\ZZ}{\mathbb{Z}/2\mathbb{Z}}
\newcommand{\RR}{\mathbb{R}}
\newcommand{\bfv}{\mathbf{v}}
\newcommand{\bfw}{\mathbf{w}}
\DeclareMathOperator{\rankk}{rank}
\DeclareMathOperator{\II}{I}
\DeclareMathOperator{\AHI}{AHI}
\DeclareMathOperator{\AKh}{AKh}
\DeclareMathOperator{\idd}{id}
\DeclareMathOperator{\Kh}{Kh}
\DeclareMathOperator{\Khr}{Khr}
\DeclareMathOperator{\CKhr}{CKhr}
\let\c@equation\c@thm
\numberwithin{equation}{section}
\title{Annular Khovanov Homology and Augmented Links}
\author{Hongjian Yang}
\email{yanghongjian@pku.edu.cn}
\address{School of Mathematical Sciences, Peking University, Beijing 100871, China}
\begin{document}

\begin{abstract}

 Given an annular link $L$, there is a corresponding augmented link $\widetilde{L}$ in $S^3$ obtained by adding a meridian unknot component to $L$. In this paper, we construct a spectral sequence with the second page isomorphic to the annular Khovanov homology of $L$ and it converges to the reduced Khovanov homology of $\widetilde{L}$. As an application, we classify all the links with the minimal rank of annular Khovanov homology. We also give a proof that annular Khovanov homology detects unlinks.

\end{abstract}

\maketitle


\section{Introduction}\label{section1}

Khovanov \cite{Khovanov1999ACO} defined an invariant for links which assigns a bigraded abelian group $\Kh(L)$ for each link $L\subset S^3$. It is a categorification of Jones polynomial in the sense that it replaces terms in Jones polynomial by graded abelian groups. Since then, many related invariants have been studied, including Lee's deformation and Rasmussen's $s$-invariant \cite{LEE2005554,Rasmussen2004KhovanovHA}, the reduced version \cite{Khovanov2003PatternsIK}, the thickened surface version \cite{Asaeda2004CategorificationOT}, the tangle invariant \cite{bn05} and Khovanov--Rozansky homology \cite{10.2140/gt.2008.12.1387}, etc..

Several spectral sequences that reveal the relationship between Khovanov homology theories and Floer theories have been established. The first one is due to Ozsv\'ath and Szab\'o \cite{Oz03} that builds a connection between the reduced Khovanov homology of the mirror of a link $L$ and the Heegaard Floer homology of the branched double cover of $S^3$ over $L$. Kronheimer and Mrowka \cite{Kronheimer2010KhovanovHI} constructed a spectral sequence with the $E_1$ term isomorphic to Khovanov homology and converging to a version of singular instanton Floer homology.

Let $A$ be an annulus (sometimes it is convenient to view $A$ as a punctured disk). Then the theory of thickened surface \cite{Asaeda2004CategorificationOT} applies for $A\times I$, which is called \textit{annular Khovanov homology}. Roberts \cite{Roberts2013OnKF} constructed a spectral sequence from annular Khovanov homology to Heegaard Floer homology. The analogue of Rasmussen's $s$-invariant in the annular settings was studied \cite{AKhL17}. Xie \cite{XIE2021107864} introduced annular instanton Floer homology for annular links as an analogue of the annular Khovanov homology, and they are also related by a spectral sequence, which can be used to distinguish braids from other tangles \cite{XIE2021107864,XZ19}. 

The relationship between annular Khovanov homology and the original Khovanov homology was studied. There is a natural spectral sequence between them given by ignoring the punctured point \cite[Lemma 2.3]{Roberts2013OnKF}. However, considering the augmentation of links is more helpful to preserve the information about the punctured point.

\begin{defn}
	Let $L\subset A\times I$ be an annular link. The \textit{augmentation} of $L$ is a pointed link $(\widetilde{L},p)\subset \RR^3$ obtained as follows. We view the thickened annulus $A\times I$ as a solid torus in $\RR^3$, and $\widetilde{L}$ is given by the union of $L$ and a meridian circle of $A$ (sometimes we call it an \textit{augmenting circle}). The base point $p$ is chosen on the augmenting circle.
\end{defn}

\begin{figure}[hbtp]
	\centering
	\begin{tikzpicture}
		\draw[dotted] (-2,0) circle [radius=0.175];
		\draw[dotted] (-2,0) circle [radius=1.5];
		\begin{knot}[ignore endpoint intersections=false,clip width=5,clip radius=6pt,looseness=1.2,xshift=-2cm]
			
			\strand[thick](0.5,0) to[out=90,in=0] (0.125,0.375) to[out=180,in=90] (-0.25,0) to[out=-90,in=0] (-0.5,-0.25) to[out=180,in=-90] (-0.75,0);
			
			\strand[thick] (-1,0) to[out=90,in=180] (-0.75,0.25) to[out=0,in=90] (-0.5,0) to[out=-90,in=180] (0,-0.5) to[out=0,in=-90] (0.5,0);
			
			\strand[thick](-0.75,0) to[out=90,in=180] (0.125,0.875) to[out=0,in=90] (1,0) to[out=-90,in=0] (0,-1) to[out=180,in=-90] (-1,0);
		\end{knot}
		
		\begin{knot}[ignore endpoint intersections=false,clip width=5,clip radius=2pt,looseness=1.2,xshift=2cm]
			
			\draw[fill=blue,blue] (0,0) circle[radius=0.04];
			
			\strand[thick](0.5,0) to[out=90,in=0] (0.125,0.375) to[out=180,in=90] (-0.25,0) to[out=-90,in=0] (-0.5,-0.25) to[out=180,in=-90] (-0.75,0);
			
			\strand[thick] (-1,0) to[out=90,in=180] (-0.75,0.25) to[out=0,in=90] (-0.5,0) to[out=-90,in=180] (0,-0.5) to[out=0,in=-90] (0.5,0);
			
			\strand[thick](-0.75,0) to[out=90,in=180] (0.125,0.875) to[out=0,in=90] (1,0) to[out=-90,in=0] (0,-1) to[out=180,in=-90] (-1,0);
			
			\strand[thick,blue](0,0) to[out=-90,in=180] (0.625,-0.2) to[out=0, in=-90](1.25,0) to[out=90,in=0] (0.625,0.2) to[out=180,in=90](0,0);
			
			\flipcrossings{7,9};
		\end{knot}
	\end{tikzpicture}
	\caption{An annular link and its augmentation.}
	\label{whitehead}
\end{figure}
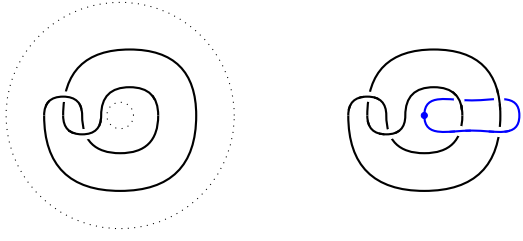

Under this convention, Xie \cite[Section 4.3]{XIE2021107864} showed that the annular instanton Floer homology $\AHI(L)$ is isomorphic to $\II^\natural(\widetilde{L})$, the reduced singular instanton Floer homology of the augmented link. In this paper, we prove the following theorem as an analogue of Xie's result in the Khovanov side. To avoid the sign issues, all the coefficient rings will be $\ZZ$ unless otherwise specified.

\begin{thm}\label{mainthm}
	Let $L\subset A\times I$ be an annular link and let $(\widetilde{L},p)\subset S^3$ be the corresponded augmented link of $L$. Then there is a spectral sequence with the $E_2$ term isomorphic to the annular Khovanov homology $\AKh(L)$ and it converges to the reduced Khovanov homology $\Khr(\widetilde{L},p)$.
\end{thm}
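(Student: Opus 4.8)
The plan is to realize the spectral sequence as the one associated to a filtration of the reduced Khovanov complex of $\widetilde L$ adapted to the augmenting circle. First I would fix an annular diagram $D$ of $L$ together with a properly embedded arc $a\subset A$ cutting the annulus into a disc, isotoped so that $a$ meets $D$ transversally in $m$ points; then the augmenting circle $C$ can be drawn as a thin loop around $a$, and a planar diagram $\widetilde D$ of $\widetilde L$ is obtained from $D$ by adjoining $C$ together with $2m$ new crossings (two for each point of $D\cap a$), with the basepoint $p$ placed on $C$ away from these. Thus the crossings of $\widetilde D$ split as the crossings of $D$ and the $2m$ \emph{augmentation crossings}. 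On $\CKhr(\widetilde D,p)$ I would filter by the number of $1$-smoothings among the crossings of $D$: since the Khovanov differential flips crossings one at a time, raising this number when it flips a crossing of $D$ and preserving it otherwise, this is a descending filtration by subcomplexes; it is bounded and the complex is finite, so the associated spectral sequence converges to $H_\ast(\CKhr(\widetilde D,p))=\Khr(\widetilde L,p)$, which takes care of the target.

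Next I would identify the low pages. The differential on the associated graded flips only augmentation crossings, so writing $D_\rho$ for the collection of disjoint circles in $A$ produced by a resolution $\rho$ of the crossings of $D$, the $E_0$-page is $\bigoplus_\rho \CKhr(D_\rho\cup C,p)$, where $D_\rho\cup C\subset S^3$ is the link consisting of the circles $D_\rho$ together with the augmenting meridian $C$ clasping them; hence $E_1=\bigoplus_\rho \Khr(D_\rho\cup C,p)$. The key local computation is then that an embedded circle in the annulus is either trivial or essential, the trivial circles of $D_\rho$ are null-homologous in the solid torus and hence unlinked from the meridian $C$, and the essential circles are mutually parallel longitudes each forming a Hopf clasp with $C$; so $D_\rho\cup C$ is, up to isotopy, an iterated connected sum along $C$ of $r(\rho)$ Hopf links (one per essential circle) and $t(\rho)$ split unknots (one per trivial circle). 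Working over $\ZZ$ (a field), reduced Khovanov homology obeys the connected-sum Künneth formula along the marked component and satisfies $\Khr(\ell\sqcup U,p)\cong\Khr(\ell,p)\otimes\Khr(U)$, so $\Khr(D_\rho\cup C,p)\cong \Khr(\mathrm{Hopf},p)^{\otimes r(\rho)}\otimes A^{\otimes t(\rho)}$ with $A=\ZZ[X]/(X^2)$. Identifying the rank-two group $\Khr(\mathrm{Hopf},p)$ with the essential-circle module (annular gradings $\pm1$) of the Asaeda--Przytycki--Sikora TQFT, and $A$ with the trivial-circle module, this is exactly the annular state module of $\rho$; running over all $\rho$ identifies $E_1$ with the annular Khovanov chain complex of $L$ (graded by the quantum and annular gradings, with the homological grading absorbed into the filtration index).

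To finish, I would check that the $E_1$-differential is carried to the annular Khovanov differential. It is induced by the saddle cobordisms $D_\rho\rsa D_{\rho'}$ coming from flipping crossings of $D$, and these are disjoint from $C$; so by naturality of the isotopies and Künneth isomorphisms above — all of which are supported away from the saddles — $d_1$ goes over to the merge and split maps of the annular TQFT. Hence $(E_1,d_1)$ is the annular Khovanov complex and $E_2\cong\AKh(L)$, as desired. (One can view this as the ``augmented'' refinement of the naive annular-to-Khovanov spectral sequence of \cite[Lemma 2.3]{Roberts2013OnKF}, upgraded so that the puncture data is recorded by the meridian $C$.)

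The hard part will be the local computation together with its naturality: verifying that $D_\rho\cup C$ really has the stated connected-sum form simultaneously for all resolutions $\rho$ in a way compatible with the saddle maps, and that the resulting maps on reduced Khovanov homology reproduce the annular cobordism maps, including the gradings and the partial vanishing on the ``two essential circles merging to a trivial one'' morphism. A related subtlety to handle carefully is the grading bookkeeping: $\Khr(\mathrm{Hopf},p)$ is supported in two homological degrees whereas the annular essential module sits in one, so the identification $E_2\cong\AKh(L)$ respects the quantum and annular gradings but reindexes the homological grading through the filtration, and one must confirm that this reindexing does not conceal any later differentials.
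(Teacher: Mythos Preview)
Your approach is essentially the paper's: filter $\CKhr(\widetilde L)$ by the crossings of $D$ so that $E_1=\bigoplus_\rho\Khr(\widetilde{D_\rho})$, identify each summand with the annular state module via the iterated Hopf/connected-sum description of $\widetilde{D_\rho}$, and match $d_1$ with the annular differential by naturality under saddles. The ``hard part'' you correctly flag---that these isomorphisms are natural with respect to the merge/split cobordisms, including the delicate cases where two essential circles merge or a trivial circle splits into two essentials---is precisely what the paper isolates as Theorems~\ref{idtphi} and~\ref{canonicaliso} and verifies by explicit chain-level computation rather than by an abstract K\"unneth/support argument.
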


We immediately obtain the following rank inequality.

\begin{cor}\label{rankineq}
	Given an annular link $L$ and its augmentation $\widetilde{L}$, we have \[\rankk_{\ZZ} \AKh(L)\ge \rankk_{\ZZ} \Khr(\widetilde{L},p).\]
\end{cor}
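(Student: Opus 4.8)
The plan is to read the inequality off directly from the spectral sequence produced by Theorem~\ref{mainthm}, using only formal properties of spectral sequences. Since by convention all coefficients lie in $\ZZ$, which is a field, $\rankk_{\ZZ}$ of any graded module is simply its total dimension as a vector space over $\ZZ$, and this quantity is additive across short exact sequences. The chain complexes computing $\AKh(L)$ and $\Khr(\widetilde{L},p)$ are finite-dimensional, being built from finite cube-of-resolutions data attached to a fixed diagram of $L$ (respectively $\widetilde{L}$), so the spectral sequence is bounded; in particular it converges, and its $E_\infty$ page is the associated graded of a finite filtration $0=F_{-1}\subseteq F_0\subseteq\cdots\subseteq F_N=\Khr(\widetilde{L},p)$.

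The first step is the elementary fact that passing from one page to the next cannot increase total dimension: each $E_k$ is a complex over the field $\ZZ$ with differential $d_k$ and $E_{k+1}=H(E_k,d_k)$, so $\rankk_{\ZZ}E_{k+1}\le\rankk_{\ZZ}E_k$. Iterating from $E_2\cong\AKh(L)$ yields $\rankk_{\ZZ}\AKh(L)\ge\rankk_{\ZZ}E_\infty$. The second step is that a finite filtration of a vector space has the same total dimension as its associated graded, so $\rankk_{\ZZ}E_\infty=\sum_i\rankk_{\ZZ}(F_i/F_{i-1})=\rankk_{\ZZ}\Khr(\widetilde{L},p)$. Chaining the two inequalities gives the claim.

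There is no genuine obstacle: the statement is a purely formal consequence of the existence of the spectral sequence, and the only points needing (minor) attention are verifying that the spectral sequence is bounded---so that $E_\infty$ really computes the associated graded of $\Khr(\widetilde{L},p)$---and that working over the field $\ZZ$ makes rank behave additively, which is exactly what both steps use. Over a general coefficient ring one would instead have to track free and torsion parts separately, but the standing convention that $\ZZ$ is the coefficient ring removes that complication.
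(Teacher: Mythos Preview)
Your argument is correct and is exactly the formal consequence the paper intends: the corollary is stated with no proof beyond the phrase ``We immediately obtain the following rank inequality,'' and your two-step reasoning (ranks of pages weakly decrease over a field, and $E_\infty$ has the same total dimension as the abutment) is precisely what that phrase is shorthand for. There is nothing to add or correct.
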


\begin{quest}
	For what link $L$ we have $\AKh(L)$ is isomorphic to $\Khr(\widetilde{L},p)$?
\end{quest}

Theorem \ref{mainthm} provides an alternative way to prove some detection results by referring to the parallel consequences in reduced Khovanov homology. For a link $L$ with $n$ components, it is well-known that $\rankk_{\ZZ}\Khr(L,p)\ge 2^{n-1}$. Hence by the previous corollary, for an annular link $L$, we have \[\rankk_{\ZZ}\AKh(L)\ge \rankk_{\ZZ}\Khr(\widetilde{L},p)\ge 2^n.\]On the other hand, links of minimal rank in $A\times I$ can be classified following \cite{XZ19b}. Before state the result, we first explain the notation. Given a forest $G$, its corresponding link $L_G$ is defined by assigning each vertex of $G$ an unknot component and linking two unknots in the way of Hopf links whenever their corresponding vertices are adjacent. For annular links, the only additional rule is that we need to assign which vertex is corresponding to a nontrivial circle. We say such vertices are \textit{annular} for convenience.

\begin{thm}\label{classify}
	Let $L$ be an $n$-component annular link. Then $\rankk_{\ZZ}\AKh(L)=2^n$ if and only if $L$ is a forest of unknots such that each connected component of the corresponding graph of $L$ contains at most one annular vertex.
\end{thm}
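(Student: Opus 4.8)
The plan is to prove both directions using Corollary \ref{rankineq} together with the classification of links with minimal rank in reduced Khovanov homology. The key external input, from \cite{XZ19b} (and ultimately going back to work on Khovanov detection of unlinks and Hopf-link–type pieces), is the characterization of links $L'\subset S^3$ with $\rankk_{\ZZ}\Khr(L',p)=2^{m-1}$ for an $m$-component link: these are exactly the forests of unknots $L_G$ in the sense described in the excerpt. Granting this, the strategy is to feed the augmented link $\widetilde{L}$ into that classification and translate the resulting graph-theoretic description back to a statement about $L$.

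For the ``if'' direction, suppose $L$ is a forest of unknots $L_G$ whose graph $G$ has the property that each connected component contains at most one annular vertex. I would first compute $\AKh(L)$ directly. Annular Khovanov homology is multiplicative under the relevant disjoint-union/connected-sum-type operations on the annular side, and for a single unknot (trivial or essential in $A\times I$) and for a Hopf-link clasp the ranks are easily computed; a component-by-component bookkeeping over $G$ gives $\rankk_{\ZZ}\AKh(L_G)=2^n$. The condition ``at most one annular vertex per component'' is exactly what prevents the extra rank growth that would otherwise occur when two essential circles are clasped together or when an essential circle meets several others — this is where the combinatorial hypothesis is used. Alternatively, and perhaps more cleanly, one can observe that under this hypothesis the augmented link $\widetilde{L}$ is itself (isotopic to) a forest of unknots: the augmenting meridian circle links precisely the annular components, so adding it to $L_G$ amounts to adding one new vertex joined to the (at most one per component) annular vertices, which keeps the graph a forest. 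Then $\rankk_{\ZZ}\Khr(\widetilde{L},p)=2^n$ by the reduced-Khovanov classification, and since we always have $2^n\le \rankk_{\ZZ}\AKh(L)$ and, by Corollary \ref{rankineq}, $\rankk_{\ZZ}\AKh(L)\ge\rankk_{\ZZ}\Khr(\widetilde{L},p)=2^n$, we need the matching upper bound $\rankk_{\ZZ}\AKh(L)\le 2^n$, which the direct computation supplies.

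For the ``only if'' direction, assume $\rankk_{\ZZ}\AKh(L)=2^n$. By Corollary \ref{rankineq}, $\rankk_{\ZZ}\Khr(\widetilde{L},p)\le 2^n$, and since $\widetilde{L}$ has $n+1$ components the general bound $\rankk_{\ZZ}\Khr(\widetilde{L},p)\ge 2^{(n+1)-1}=2^n$ forces equality. Hence $\widetilde{L}$ is a forest of unknots $L_H$ for some forest $H$ on $n+1$ vertices. Now delete the vertex $v_0$ corresponding to the augmenting circle: the remaining link is $L$, so $L$ is a forest of unknots $L_G$ with $G=H\setminus v_0$. It remains to see that each connected component of $G$ has at most one annular vertex. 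Here the point is that the augmenting circle $v_0$ is a meridian of the annulus, hence it is clasped (Hopf-linked) with a component of $L$ exactly when that component is essential in $A\times I$, i.e. corresponds to an annular vertex; and it is clasped with the same augmenting circle only once per such component since the meridian bounds a disk meeting each essential component once. Since $H$ is a forest, $v_0$ cannot be adjacent to two vertices lying in the same connected component of $G$ (that would create a cycle through $v_0$), so each component of $G$ contains at most one vertex adjacent to $v_0$, i.e. at most one annular vertex. (One must also check the converse matching: every annular vertex of $G$ really is adjacent to $v_0$ in $H$, which follows from the geometric linking description.) This completes the argument.

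The main obstacle I expect is justifying the precise dictionary between the annular isotopy type of $L$ and the pair (graph $G$, set of annular vertices), together with the claim that $\widetilde{L}$ is again a forest-of-unknots link with the augmenting vertex adjacent exactly to the annular vertices — i.e. that adding the meridian does not secretly create extra linking or unknotting. This requires a careful geometric argument that a forest of unknots in $A\times I$, with essential circles as specified, sits inside the solid torus in ``standard position'' so that the meridian disk meets exactly the essential components, each once; combined with the rigidity coming from the reduced-Khovanov classification (which pins down $\widetilde{L}$ up to isotopy in $S^3$, not just its rank), this should suffice, but it is the step where the annular vs.\ $S^3$ bookkeeping is most delicate.
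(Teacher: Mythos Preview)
Your proposal is correct and follows essentially the same route as the paper. The ``only if'' direction is virtually identical: apply Corollary~\ref{rankineq} to get $\rankk_{\ZZ}\Khr(\widetilde{L},p)\le 2^n$, combine with the general lower bound for an $(n+1)$-component link to force equality, invoke the classification from \cite{XZ19b} to conclude $\widetilde{L}$ is a forest of unknots, and then observe that two annular vertices in the same component of $G$ would create a cycle through the augmenting vertex in $\widetilde{G}$.

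For the ``if'' direction, the paper's argument is slightly more streamlined than your ``direct computation'' sketch: rather than bookkeeping Hopf clasps one at a time, it observes that each connected component of $L_G$ has at most one annular strand, so a short lemma (Lemma~\ref{1strand} in the paper) identifies $\AKh$ of such a piece with two shifted copies of $\Khr$ of the underlying link in $S^3$; the rank then follows from the $S^3$ forest classification and K\"unneth. Your alternative idea of showing $\widetilde{L}$ is again a forest of unknots is correct as a structural observation but, as you noted yourself, Corollary~\ref{rankineq} points the wrong way to extract the upper bound from it, so some direct computation on the annular side is unavoidable. The ``dictionary'' concern you flag at the end (that removing the augmenting vertex really recovers $L$ as an annular forest, with annular vertices exactly the neighbors of $v_0$) is handled in the paper at the same level of brevity you use; the point that makes it work is that in a forest of unknots any two non-adjacent components form a genuine $2$-component unlink, so a component not adjacent to $v_0$ bounds a disk disjoint from the augmenting circle and is therefore trivial in $A\times I$.
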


 We say an annular link $U$ is an \textit{unlink} if it has a link diagram $D$ without any crossing. Notice that our definition given here is slightly different to \cite{XIE2021107864}. The following result is a generalization of \cite[Corollary 1.4]{XZ19}.

\begin{cor}\label{detect}
	Let $L$ be an annular link with $n$ components and let $U$ be an annular unlink with $n$ components (might be trivial or nontrivial). Assume that \[\AKh(L)\cong\AKh(U)\]as bigraded (by homological and Alexander gradings) abelian groups. Then $L$ is isotopic to $U$.
\end{cor}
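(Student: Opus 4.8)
The plan is to reduce the detection statement for annular Khovanov homology to the corresponding detection result on the reduced Khovanov homology of augmented links, using Theorem \ref{mainthm} and the classification in Theorem \ref{classify}. First I would record two elementary facts about the annular unlink $U$ with $n$ components: since $U$ has a crossingless diagram, its annular Khovanov homology is a tensor product of the annular Khovanov homologies of the individual circles (trivial or nontrivial), and a direct computation gives $\rankk_{\ZZ}\AKh(U)=2^n$. In the graph language of Theorem \ref{classify}, $U$ corresponds to a forest with $n$ isolated vertices, of which some prescribed number $k$ (those with a nontrivial circle) are annular. Thus from the hypothesis $\AKh(L)\cong\AKh(U)$ we get $\rankk_{\ZZ}\AKh(L)=2^n$, so Theorem \ref{classify} applies and $L$ is itself a forest of unknots $L_G$ with each connected component of $G$ carrying at most one annular vertex.

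Next I would pin down which forest $G$ can occur, using the bigrading. The key point is that the bigraded group $\AKh(L_G)$ determines the isomorphism type of the forest $G$ together with the data of which vertices are annular. Concretely, $\AKh(L_G)$ is the tensor product over connected components of $G$ of the annular Khovanov homologies of the corresponding sublinks, and each such factor is a tensor-type expression built from copies of $\AKh$ of a Hopf-linked tree whose Alexander-graded Poincaré polynomial records the tree structure and the position of the (unique) annular vertex. Comparing with $U$, whose components are all single vertices, the Alexander grading forces every connected component of $G$ to contribute the same graded group as a single circle; since a Hopf link of two unknots has strictly larger rank than a single unknot, no component of $G$ can have an edge. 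Hence $G$ is a disjoint union of $n$ isolated vertices, i.e., $L$ is an $n$-component unlink, and counting how many of these vertices are annular — again read off from the Alexander grading of $\AKh$, where a nontrivial circle contributes support in nonzero Alexander grading while a trivial circle is supported in Alexander grading $0$ — shows that exactly $k$ of them are nontrivial, matching $U$. Therefore $L$ is an annular unlink with the same number of trivial and nontrivial components as $U$, and such an unlink is unique up to isotopy in $A\times I$, so $L$ is isotopic to $U$.

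The main obstacle I anticipate is the bigrading bookkeeping in the middle step: to conclude that no component of $G$ has an edge and that the annular/trivial split matches, one must know precisely the Alexander-graded Poincaré polynomial of $\AKh$ of a Hopf-linked forest with one annular vertex, and in particular that adjoining even a single Hopf-linked unknot strictly increases the total rank beyond $2^{(\text{number of vertices})}$ — equivalently, that the minimal-rank links of Theorem \ref{classify} are exactly the edgeless ones and no others achieve $2^n$. This is essentially already contained in Theorem \ref{classify} and its proof, so the argument here is a matter of extracting the graded refinement; the remaining ingredient, uniqueness of an annular unlink with a given number of trivial and nontrivial components, is classical. An alternative, perhaps cleaner, route is to feed $\AKh(L)\cong\AKh(U)$ through Theorem \ref{mainthm}: both links then have reduced Khovanov homology of the augmented links of equal rank $2^n$ with $\widetilde{U}$ an unlink of $n+1$ components, invoke the known rank-$2^n$ detection for reduced Khovanov homology to identify $\widetilde{L}$ with $\widetilde{U}$ as (unpointed) links, and finally recover the annular link $L$ from $\widetilde{L}$ together with the augmenting circle. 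I would present the direct argument as primary and remark on the augmented-link route as a sanity check.
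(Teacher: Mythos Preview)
Your overall architecture is the same as the paper's: apply Theorem \ref{classify} to deduce that $L$ is a forest of unknots, then rule out edges in the forest, then read off the number of annular components from the Alexander grading. The paper carries this out in a few lines.

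However, the step where you rule out edges has a genuine gap. You write that ``a Hopf link of two unknots has strictly larger rank than a single unknot, so no component of $G$ can have an edge,'' and later that ``the minimal-rank links of Theorem \ref{classify} are exactly the edgeless ones.'' This is false, and in fact is the opposite of what Theorem \ref{classify} says: \emph{every} forest satisfying the annular condition has $\rankk_{\ZZ}\AKh = 2^n$, edges or not. Total rank cannot separate the Hopf link from two disjoint unknots. Your fallback to the Alexander grading also fails: a Hopf link with one annular vertex has, by Lemma \ref{1strand}, $\AKh$ supported on $f=\pm 1$ with rank $2$ in each, which is exactly the Alexander-graded profile of (one nontrivial unknot) $\sqcup$ (one trivial unknot). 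So neither rank nor the Alexander grading alone excludes edges.

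The missing ingredient is the \emph{homological} grading, which you have by hypothesis but never invoke. The paper's argument is simply: $\AKh(U)$ is supported in homological degree $0$, while any edge in $G$ contributes a Hopf-link factor whose reduced Khovanov homology (cf.\ the computation $\Khr(H,p)=(\ZZ)^{(0,1)}\oplus(\ZZ)^{(2,5)}$ in Section \ref{sec2}) is not supported at $t=0$; hence $G$ has no edges. Once you use the homological grading here, the rest of your argument (counting annular vertices via the $f$-grading, uniqueness of the unlink) goes through and matches the paper.

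Your alternative route also has an error: $\widetilde{U}$ is an $(n+1)$-component unlink only when all components of $U$ are trivial. If $U$ has nontrivial circles, $\widetilde{U}$ is the tree-shaped link of Figure \ref{treen}, not an unlink, so you cannot directly invoke unlink detection for $\Khr$.
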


The paper is organized as follows. In Section 2 we review the construction and properties of Khovanov homology. After some preparation in Section 3, we prove Theorem \ref{mainthm} in the last section and discuss its applications.

\subsection*{Acknowledgement}The author would like to thank his advisor Yi Xie for introducing this problem to him and giving him patient and accurate guidance. The author is also grateful to Qing Lan and Xiangqian Yang for helpful conversations. This paper is part of the author's undergraduate research and is partially supported by the elite undergraduate training program of School of Mathematical Sciences, Peking University. 

\section{Review on Khovanov homology theories}\label{sec2}

In this section, we review the construction and properties of the reduced version and the annular version of Khovanov homology.

\subsection{Reduced Khovanov homology}

The reduced version of Khovanov homology is defined in \cite{Khovanov2003PatternsIK} as a categorification of the (normalized) Jones polynomial. We first recall the definition of the original Khovanov homology.

For a link diagram $D$ with $n$ crossings, denote the number of right-handed (resp. left-handed) crossings of $D$ by $n_+$ (resp. $n_-$). For a crossing of $D$, we can use the $0$-smoothing or $1$-smoothing to resolve it, as shown in Figure \ref{01smoothing}. Fix an order of crossings and we can then use vectors $\bfv\in\{0,1\}^n$ to encode resolutions of $D$. Denote the resolution indicated by $\bfv$ by $D_\bfv$, and denote $|\bfv|$ to be the number of $1$-smoothings in $D_\bfv$. Two resolutions that only have difference on one smoothing of crossings are related by a cobordism. The resolutions of $D$ are disjoint unions of circles, and the cobordisms are the merging or splitting of circles. 

\begin{figure}[hbtp]
	\centering
	\begin{tikzpicture}
		
		\begin{knot}[ignore endpoint intersections=false,clip width=5,clip radius=6pt,looseness=1.2]
			
			\strand[thick] (-0.5,0.5)--(0.5,-0.5);
			\strand[thick] (0.5,0.5)--(-0.5,-0.5);
			
		\end{knot}
		\node[below] at (0,-0.7) {a crossing};
		\draw[->,snake=snake] (-0.8,0)--(-1.6,0);
		\draw[->,snake=snake] (0.8,0)--(1.6,0);
		
		\draw[thick] (-3,0.5)  to [out=315,in=180]  (-2.5,0.2) to [out=0,in=225]   (-2,0.5);
		\draw[thick] (-3,-0.5)  to [out=45,in=180]  (-2.5,-0.2) to [out=0,in=135]   (-2,-0.5);  \node[below] at (-2.5,-0.7) {$0$-smoothing};
		
		\draw[thick] (2,0.5)  to [out=315,in=90]  (2.2,0) to [out=270,in=45]    (2,-0.5);  \node[below] at (2.5,-0.7) {$1$-smoothing};
		\draw[thick] (3,0.5)  to [out=225,in=90]  (2.8,0) to [out=270,in=135]   (3,-0.5);
	\end{tikzpicture}
	\caption{Two types of smoothings.}\label{01smoothing}
\end{figure}
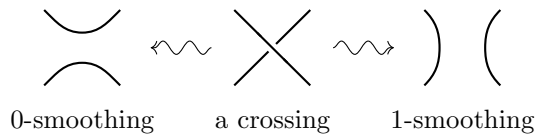

In the case of original Khovanov homology, we apply a $(1+1)$d TQFT to the resolution cube to obtain a chain complex by assigning each circle a graded free abelian group $V:=\ZZ\{v_+,v_-\}$. The resulted complex has two gradings: the homological one and the quantum one, and the latter is specified by $q\deg v_\pm=\pm 1$. Following \cite{BN02}, we denote the shift on these two gradings by $[\bullet]$ and $\{\bullet\}$, respectively. We then take a shift on the quantum grading of chain groups by $|\bfv|$  to ensure the differential preserves the quantum grading and a global shift $[-n_-]\{n_+-2n_-\}$ to ensure the invariance under Reidemeister moves. We finally take cohomology on the chain complex $(\operatorname{CKh}(L),d)$ to obtain $\Kh(L)$.  

To define the reduced version of Khovanov homology, as in other reduced theories, we need to choose a base point $p$ on the link $L$. Every resolution of $L$ has exactly one circle containing $p$, and the generators that take $v_-$ (with the $q$-grading omitted) on this pointed circle span a subcomplex $\CKhr(L,p)\subset \operatorname{CKh}(L)$. The \textit{reduced Khovanov homology} $\Khr(L,p)$ is then defined by the cohomology of $\CKhr(L,p)$. The base point is sometime omitted in the notation if it is clear from the text (e.g. when we are considering an augmented link). As an example, for Hopf link $H$ with a positive linking number, we have \[ \Khr(H,p)=(\ZZ)^{(0,1)}\oplus(\ZZ)^{(2,5)}. \] In general, the following proposition describe the effect on Khovanov homology of making a connected sum with a Hopf link. 

\begin{prop}[{\cite[Theorem 6.1]{AP04}}]\label{hopfadd}
	Let $L$ be a pointed link and let $H$ be the Hopf link with a positive linking number. Then we have a short exact sequence:\[0\to\Khr^{i-1,j-2}(L)\overset{\alpha_*}{\to}\Khr^{i+1,j+3}(L\# H)\overset{\beta_*}{\to}\Khr^{i+1,j+2}(L)\to 0.\]Here $\alpha_*$ and $\beta_*$ are given on a state $S$ as in Figure \ref{ab*}.
	
	\begin{figure}[hbtp]
	\begin{tikzpicture}
		
		\begin{knot}[ignore endpoint intersections=false,clip width=5,clip radius=6pt,looseness=1.2,xshift=-6.5cm]
			\draw[thick] (-0.3,-0.3) rectangle (0.3,0.3); \node at (0,0) {$S$};
			
			\strand[thick] (0,0.3) to[out=up,in=left] (0.3,0.6) to[out=right,in=up] (0.6,0) to[out=down,in=right] (0.3,-0.6) to[out=left,in=down] (0,-0.3);
		\end{knot}
		
		\draw[|->] (-5.5,0)--(-4.5,0) ;\node at (-5,0.2) {$\alpha_*$};
		
		\begin{knot}[ignore endpoint intersections=false,clip width=5,clip radius=6pt,looseness=1.2,xshift=-4cm]
			\draw[thick] (-0.3,-0.3) rectangle (0.3,0.3); \node at (0,0) {$S$};
			
			\strand[thick] (0,0.3) to[out=up,in=left] (0.3,0.6) to[out=right,in=up] (1.5,0) to[out=down,in=right] (0.3,-0.6) to[out=left,in=down] (0,-0.3);
			
			\draw[thick] (0.7,0) circle [radius=0.2] ;\node at (1.1,0.2) {$v_+$};
		\end{knot}	
		
		\begin{knot}[ignore endpoint intersections=false,clip width=5,clip radius=6pt,looseness=1.2,xshift=-1cm]
			\draw[thick] (-0.3,-0.3) rectangle (0.3,0.3); \node at (0,0) {$L$};
			
			\strand[thick] (0,0.3) to[out=up,in=left] (0.3,0.6) to[out=right,in=up] (1,0) to[out=down,in=right] (0.3,-0.6) to[out=left,in=down] (0,-0.3);
			
			\strand[thick] (0.5,0) to[out=up,in=left] (1,0.2) to[out=right,in=up] (1.5,0) to[out=down,in=right] (1,-0.2) to[out=left,in=down] (0.5,0);
			
			\flipcrossings{1};
		\end{knot}	
		
		\begin{knot}[ignore endpoint intersections=false,clip width=5,clip radius=6pt,looseness=1.2,xshift=1.5cm]
			\draw[thick] (-0.3,-0.3) rectangle (0.3,0.3); \node at (0,0) {$S$};
			
			\strand[thick] (0,0.3) to[out=up,in=left] (0.3,0.6) to[out=right,in=up] (0.6,0) to[out=down,in=right] (0.3,-0.6) to[out=left,in=down] (0,-0.3);
		\end{knot}
		
		\draw[|->] (2.5,0)--(3.5,0) ;\node at (3,0.2) {$\beta_*$};
		
		\begin{knot}[ignore endpoint intersections=false,clip width=5,clip radius=6pt,looseness=1.2,xshift=4cm]
			\draw[thick] (-0.3,-0.3) rectangle (0.3,0.3); \node at (0,0) {$S$};
			
			\strand[thick] (0,0.3) to[out=up,in=left] (0.3,0.6) to[out=right,in=up] (0.6,0) to[out=down,in=right] (0.3,-0.6) to[out=left,in=down] (0,-0.3);
			
			\draw[thick] (1,0) circle [radius=0.2] ;\node at (1.4,0.2) {$v_-$};
		\end{knot}
	\end{tikzpicture}
	\caption{The map $\alpha_*$ and $\beta_*$.}
	\label{ab*}
	\end{figure}
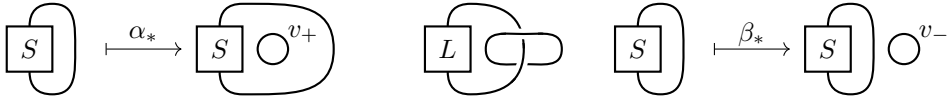
\end{prop}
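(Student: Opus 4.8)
The plan is to compute $\CKhr(L\#H,p)$ directly from a diagram in which the adjoined Hopf link contributes exactly two crossings, and then to collapse the resulting cube of resolutions by Gaussian elimination.

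First I would fix a diagram $D$ of $L$ with $p$ on some arc, and splice the standard two-crossing diagram of the positive Hopf link into that arc to obtain a diagram $D\#H$; write $c_1,c_2$ for the two new crossings, both positive. Resolving $c_1$ and $c_2$ exhibits $\CKhr(D\#H)$ as the total complex of a commuting square whose four corners are $\CKhr$ of the four smoothings of $\{c_1,c_2\}$, taken over the remaining crossings of $D$. Using that the positive Hopf diagram resolves into $2,1,1,2$ circles, the two corners in cube-degrees $0$ and $2$ are $D$ together with an extra disjoint unknot $C$ and hence equal $\CKhr(D)\otimes V$ with $V=\ZZ\{v_+,v_-\}$ carried by $C$, while the two corners in cube-degree $1$ merely distend a circle of $D$ and hence equal $\CKhr(D)$. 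Accounting for the per-vertex shift $\{|\bfv|\}$ and the global shift $[-n_-]\{n_+-2n_-\}=[0]\{2\}$ contributed by the two positive Hopf crossings, the four corners carry quantum shifts $\{2\},\{3\},\{3\},\{4\}$ relative to $\CKhr(D)$.

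Next I would Gaussian-eliminate. Each edge out of the cube-degree-$0$ corner is a merge map $\CKhr(D)\otimes V\to\CKhr(D)$ restricting to the identity on $\CKhr(D)\otimes v_+$; cancelling one such isomorphism deletes a cube-degree-$1$ corner and the $v_+$-summand. Symmetrically, each edge into the cube-degree-$2$ corner is a split map $\CKhr(D)\to\CKhr(D)\otimes V$ which is the identity onto $\CKhr(D)\otimes v_-$; cancelling one of these deletes the last cube-degree-$1$ corner and the $v_-$-summand. What survives is a two-term complex with underlying group $\CKhr(D)\{1\}$ in cube-degree $0$ (the $v_-$-part of the bottom corner) and $\CKhr(D)[2]\{5\}$ in cube-degree $2$ (the $v_+$-part of the top corner), whose differential is that of $\CKhr(D)$ on each block together with an off-diagonal term $g\colon\CKhr(D)\{1\}\to\CKhr(D)[2]\{5\}$ assembled from the rerouted Frobenius maps. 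In other words there is a short exact sequence of complexes
\[0\to\CKhr(L)[2]\{5\}\to\CKhr(L\#H)\to\CKhr(L)\{1\}\to 0,\]
whose inclusion adjoins a $v_+$-labelled circle and whose projection records the $v_-$-coefficient on the adjoined circle; a direct check matches these with the chain maps underlying $\alpha_*$ and $\beta_*$ of Figure \ref{ab*}, and $[2]\{5\}$, $\{1\}$ become the bigradings $\Khr^{i-1,j-2}$ and $\Khr^{i+1,j+2}$ relative to $\Khr^{i+1,j+3}(L\#H)$.

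Finally, the long exact sequence of this short exact sequence of complexes has connecting homomorphism induced by $g$, a homogeneous map of bidegree $(-1,-4)$ on $\Khr(L)$, and the proposition is precisely the vanishing of this connecting map. I expect this to be the only real obstacle: everything up to here is grading bookkeeping inside a Gaussian elimination, but the homological triviality of $g$ needs an extra input. The cleanest way to supply it is the multiplicativity of reduced Khovanov homology under connected sum over the field $\ZZ$, i.e. $\CKhr(L\#H)\simeq\CKhr(L)\otimes\CKhr(H)$ with $\CKhr(H)\simeq(\ZZ)^{(0,1)}\oplus(\ZZ)^{(2,5)}$ having zero differential: this forces $\dim_{\ZZ}\Khr(L\#H)=2\dim_{\ZZ}\Khr(L)$, and comparison with the long exact sequence then shows the connecting map has rank zero (indeed one can push this to show $g$ is null-homotopic, so the sequence even splits). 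Once the connecting map is killed the long exact sequence breaks into the asserted short exact sequences, and the identification of the two maps with $\alpha_*$ and $\beta_*$ completes the proof.
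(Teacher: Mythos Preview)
The paper does not prove this proposition; it is quoted from \cite{AP04} and used as a black box. So there is no ``paper's own proof'' to compare against, and your write-up is being judged on its own.

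Your argument is correct, but you have made the last step harder than it needs to be. You worry that the off-diagonal term $g\colon\CKhr(D)\{1\}\to\CKhr(D)[2]\{5\}$ produced by Gaussian elimination might be nonzero, and you propose to kill it a posteriori using the connected-sum formula $\Khr(L\#H)\cong\Khr(L)\otimes\Khr(H)$ over $\ZZ$. That works, but it is unnecessary: if you trace through the two eliminations carefully you will see that $g=0$ already on the chain level. The point is that the surviving degree-$0$ summand is $(0,0)_{v_-}$, i.e.\ states where the extra circle carries $v_-$, and the merge of this circle into the \emph{pointed} circle (which in the reduced complex is also labelled $v_-$) is $v_-\cdot v_-=0$. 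Hence both edge maps out of $(0,0)_{v_-}$ vanish before any cancellation, so no zigzag correction ever appears, and after the two eliminations the complex is literally the direct sum $\CKhr(L)\{1\}\oplus\CKhr(L)[2]\{5\}$ with only the internal differentials $d_L$. The short exact sequence is then immediate and split, with no appeal to multiplicativity needed.

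So your overall strategy is sound and standard; just replace the final paragraph with the one-line observation that $m|_{v_-\otimes v_-}=0$ forces $g=0$.
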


\subsection{Annular Khovanov homology}The annular version of Khovanov homology can be viewed as a special case of the link homology for links in thickened surfaces defined in \cite{Asaeda2004CategorificationOT}. Let $A$ be an annulus. The annular Khovanov homology assigns a triply-graded abelian group $\AKh(L)$ for each annular link $L\subset A\times I$. We follow the process and notation of \cite{XIE2021107864}. 

Let $D$ be a link diagram of $L$ and define $n,n_\pm,\bfv,D_\bfv,V$ as in the previous subsection. In the annular case, there might be two types of circles in a resolution:  circles that bound disks and circles with nontrivial homologies. We call the first type of circles \textit{trivial} and the second ones \textit{nontrivial}. To obtain the chain groups, we assign $V$ to trivial circles and assign $W:=\ZZ\{w_+,w_-\}$ to nontrivial circles. The differentials are specified by the map corresponding to the merging or splitting of circles, as follows.

\begin{itemize}
		\item Two trivial circles merge into a trivial circle, or one trivial circle splits into two trivial circles. In these cases, the maps are given as same as in Khovanov's original TQFT.
	
		\item One trivial circle and one nontrivial circle merge into a nontrivial circle. In this case, the maps are given by\[v_+\otimes w_\pm\mapsto w_\pm,\ v_-\otimes w_\pm\mapsto 0.\]
		
		\item One nontrivial circle splits into a trivial circle and a nontrivial circle. In this case, the maps are given by \[w_\pm\mapsto v_-\otimes w_\pm.\]
		
		\item Two nontrivial circles merge into a trivial circle. In this case, the maps are given by\[w_\pm\otimes w_\pm\mapsto 0,\ w_\pm\otimes w_\mp\mapsto v_-.\]
		
		\item One trivial circle splits into two nontrivial circles. In this case, the maps are given by\[v_+\mapsto w_+\otimes w_-+w_-\otimes w_+,\ v_-\mapsto 0.\]
\end{itemize}

The homological and quantum grading are given as same as the original case with the additional request that $q\deg w_\pm=\pm 1$. After appropriate shifts, the differential is still filtered of degree $(1,0)$. 

There is the third grading on the chain complex, so-called the \textit{Alexander grading} or $f$-grading, which is specified by $f\deg v_\pm=0$ and $f\deg w_\pm=\pm 1$. The differential preserves the $f$-grading and hence it descends onto the cohomology groups $\AKh(L)$, the annular Khovanov homology. 

\begin{thm}[\cite{Asaeda2004CategorificationOT}]
	The annular Khovanov homology $\AKh(L)$ is an invariant of links in the sense that it is independent of the choice of link diagrams and the order of crossings.
\end{thm}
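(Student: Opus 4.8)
The plan is to verify that the triply-graded homology $\AKh(L)$ is unchanged under the moves that relate any two link diagrams of the same annular link: reordering the crossings of a fixed diagram, isotoping the diagram within the annulus, and the three Reidemeister moves performed inside a disk (these generate annular isotopy). The first two are immediate. A permutation $\sigma$ of the crossings induces a bijection $\bfv\mapsto\sigma\cdot\bfv$ of the resolution cube that identifies $D_\bfv$ with $D_{\sigma\cdot\bfv}$ circle-for-circle, hence an isomorphism of chain complexes preserving all three gradings and commuting with the differential (there are no sign issues, since we work over $\ZZ$); and an isotopy of the diagram inside $A$ gives a tautological isomorphism of complexes because it preserves the homotopy class, and in particular the triviality or nontriviality, of every circle in every resolution. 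So the real content is Reidemeister invariance.

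For each Reidemeister move I would exhibit the standard chain homotopy equivalence of the Khovanov--Bar-Natan type, \emph{delooping} followed by \emph{Gaussian elimination}, as in the invariance proof for ordinary Khovanov homology \cite{Khovanov1999ACO,BN02}. The geometric observation that lets these arguments carry over is that a Reidemeister move takes place inside an embedded disk $B\subset A$, so every circle that is created or destroyed in passing between the resolutions involved in the move lies in $B$ and therefore bounds a disk: it is a \emph{trivial} circle, carrying the Frobenius algebra $V$. Consequently every cobordism map occurring in the local complexes is of one of two kinds: (i) one of the ordinary $(1+1)$d maps $m,\Delta,\iota,\epsilon$ on trivial circles, possibly tensored with the identity on spectator factors $V$ or $W$ coming from circles outside $B$; or (ii) one of the two "module" maps in which the trivial circle being born or killed merges into, or splits off from, a circle that may be nontrivial, namely $v_+\otimes w_\pm\mapsto w_\pm$, $v_-\otimes w_\pm\mapsto 0$ and $w_\pm\mapsto v_-\otimes w_\pm$. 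The grading shifts $[-n_-]\{n_+-2n_-\}$ and the local shift by $|\bfv|$ depend only on $n_\pm$ and $|\bfv|$, not on circle types, so they behave exactly as classically; and each of the structure maps above is homogeneous of $f$-degree $0$, so the $f$-grading is respected by all maps and homotopies.

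The one step I expect to require genuine work is checking that the algebraic identities underlying the local homotopy equivalences remain valid when a circle adjacent to the move is nontrivial: namely that a trivial circle can be delooped even in the presence of a nontrivial spectator, $W\langle\mathrm{triv}\rangle\cong W\{1\}\oplus W\{-1\}$ (with the isomorphism built from $\iota,\epsilon$ on the trivial factor together with the two module maps), exactly as $V\langle\mathrm{triv}\rangle\cong V\{1\}\oplus V\{-1\}$; that the ensuing Gaussian-elimination cancellations leave the same surviving summands as in the classical computation; and, for the R2 and R3 moves, that the requisite "surface relations" (neck-cutting, the $S^2=0$ relation, and so on) hold in the thickened annulus. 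All of these reduce to the compatibility of the $V$-module and $V$-comodule structure on $W$ with the Frobenius structure on $V$ — equivalently, to the fact that the assignment sending a trivial circle to $V$, a nontrivial circle to $W$, and cobordisms to the maps listed above is a well-defined TQFT on the category of surfaces embedded in $A\times I\times[0,1]$, which is precisely the content of \cite{Asaeda2004CategorificationOT}. Granting this, the R1, R2 and R3 invariances follow by the same diagrams as for ordinary Khovanov homology, now read with the trivial-circle factors playing their classical role and the nontrivial-circle factors carried along as spectators, which completes the proof.
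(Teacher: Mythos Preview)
The paper does not prove this theorem at all: it is stated with attribution to \cite{Asaeda2004CategorificationOT} and no argument is given. So there is no proof in the paper to compare against; your proposal is a self-contained sketch of the invariance argument that the cited reference carries out.

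As a sketch your outline is correct and is essentially the standard argument. The crucial geometric point you identify --- that a Reidemeister move is supported in an embedded disk in $A$, so every circle created or destroyed by the move is trivial and carries $V$ --- is exactly what makes the classical Bar-Natan/Khovanov chain-homotopy equivalences go through unchanged, with $W$-factors riding along as spectators. The module/comodule identities you list for $V$ acting on $W$ are precisely what is needed to check that delooping and Gaussian elimination behave as usual and preserve the $f$-grading. If you wanted to turn this into a full proof rather than a plan, the only place requiring care is R3 (or, in the Bar-Natan formulation, the $4\mathrm{Tu}$/neck-cutting relation) when one of the strands entering the local picture belongs to a nontrivial circle; but as you note, this reduces to the same algebraic compatibilities and is handled in \cite{Asaeda2004CategorificationOT}.
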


We conclude this section by some additional remarks. Sometimes we write $\AKh(L,m)$ to indicate the $f$-degree $m$ summand of $\AKh(L)$. If $L$ is contained in a ball $B^3\subset A\times I$, then $\AKh(L)$ is supported on $f=0$ and $\AKh(L)\cong \Kh({L})$. Both the reduced Khovanov homology and the annular Khovanov homology are functorial. That is, a cobordism $\rho\colon L_1\to L_2$ between links (resp. annular links) induces a (filtered) map between Khovanov homology groups \[ \Khr(\rho)\colon\Khr(L_1)\to\Khr(L_2)\ (\text{resp. }\AKh(\rho)\colon\AKh(L_1)\to\AKh(L_2)).\] 

\section{The unlink case}\label{sec3}

In this section, we construct an isomorphism between the annular Khovanov homology of an annular unlink and the reduced Khovanov homology of its augmentation. We show that such an isomorphism is compatible with the group homomorphisms induced by the cobordism maps.

\subsection{Homology groups}

Denote the annular unlink with $n$ nontrivial unknot components by $U_n$ and let $\widetilde{U_n}$ be its augmentation. In the language of \cite{XZ19b}, $\widetilde{U_n}$ corresponds to the graph shown in Figure \ref{treen}.

	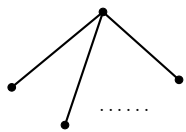
\begin{figure}[hbtp]
	\begin{tikzpicture}
		\draw[fill=black] (0,1) circle[radius=0.05];
		\draw[fill=black] (-1.2,0) circle[radius=0.05];
		\draw[fill=black] (-0.5,-0.5) circle[radius=0.05];
		\draw[fill=black] (1,0.1) circle[radius=0.05];
		\draw[thick] (0,1)--(-1.2,0);
		\draw[thick] (0,1)--(-0.5,-0.5);
		\draw[thick] (0,1)--(1,0.1);

		\node[scale=0.75] at (0.3,-0.3) {$\dots\dots$};
	\end{tikzpicture}
	\caption{The tree corresponding to $\widetilde{U_n}$.}
	\label{treen}
	\end{figure}

The obvious diagram of $U_n$ contains $n$ disjoint nontrivial circles. In this section, we will stick on this diagram to calculate homology groups. We assign the number $1$ to $n$ from the innermost nontrivial circle to the outermost one. By Proposition \ref{hopfadd}, the Poincar\'e polynomial of $\Khr(\widetilde{U_n})$ is given by \[P(\widetilde{U_n})=(tq^3)^n(tq^2+t^{-1}q^{-2})^n.\]Here the homological and quantum grading are indicated by $t$ and $q$ respectively.

Each original component of $\widetilde{U_n}$ has two crossings with the meridian circle. There are $2^n$ resolutions such that every pair of crossings is resolved by the same smoothing. We say such resolutions are \textit{symmetric} and encode them by $0-1$ sequences of length $n$, as shown in Figure \ref{reso10}. Notice that a symmetric resolution always has $n$ (unpointed) components. We denote the cobordism of changing one crossing (on the $k$-th strand) from $0$-smoothing to $1$-smoothing by $(\cdots \bullet\cdots)$ (here the mark $\bullet$ is on the $k$-th digit).

\begin{figure}[hbtp]
	\begin{tikzpicture}
		\begin{knot}[scale=1.5,ignore endpoint intersections=false,clip width=5,clip radius=6pt,looseness=1.2,xshift=0cm,yshift=0cm]
			
			\strand[thick] (0,0) circle[radius=0.4];
			\strand[thick] (0,0) circle[radius=0.2];
			\strand[thick] (0,0) to[out=up,in=left] (0.2,0.2) to[out=right,in=up] (0.6,0) to[out=down,in=right] (0.2,-0.2) to[out=left,in=down] (0,0);
			\flipcrossings{2,4};
			
		\end{knot}
		\draw[fill=black] (0,0) circle[radius=0.06];
		\draw[scale=1.5,thick,green] (0.1,0.1)--(0.1,0.25);
		\draw[scale=1.5,thick,green] (0.1,-0.1)--(0.1,-0.25);
		\draw[scale=1.5,thick,green] (0.42,0.15)--(0.25,0.25);
		\draw[scale=1.5,thick,green] (0.42,-0.15)--(0.25,-0.25);
	\end{tikzpicture}
	\caption{The symmetric resolution $(10)$ of $\widetilde{U_2}$.}
	\label{reso10}
\end{figure}
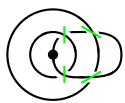

We can now describe the generators of $\Khr(\widetilde{U_n})$ explicitly. 

\begin{prop}\label{hopfsym}
	For each symmetric resolution $v\in\{0,1\}^n$, we can choose an element $e_v$ lying in the chain group corresponding to this resolution. The collection of $e_v$ descends to a generating set of $\Khr(\widetilde{U_n})$.
\end{prop}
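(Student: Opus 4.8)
The plan is to induct on $n$. The key geometric input is that, up to isotopy, $\widetilde{U_n}$ is obtained from $\widetilde{U_{n-1}}$ by a connected sum with a positive Hopf link $H$ along the augmenting (pointed) component: adjoining one more meridian of the augmenting circle is exactly such an operation. On the level of the standard diagram this inserts one extra pair of crossings, whose two symmetric smoothings are precisely the two ways of prolonging a symmetric resolution of $\widetilde{U_{n-1}}$ to a symmetric resolution of $\widetilde{U_n}$, so that the $2^n$ symmetric resolutions of $\widetilde{U_n}$ are indexed by the $2^{n-1}$ symmetric resolutions of $\widetilde{U_{n-1}}$ together with a choice of bit. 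Since the coefficient ring is $\ZZ$, a generating set is just a spanning set; and because there are $2^n$ symmetric resolutions while $\rankk_{\ZZ}\Khr(\widetilde{U_n})=2^n$ by evaluating the Poincar\'e polynomial above at $t=q=1$, it suffices to produce one cycle $e_v$ supported on each $D_v$ so that the classes $[e_v]$ span $\Khr(\widetilde{U_n})$. The base case $n=0$ is immediate, since $\widetilde{U_0}$ is an unknot and its reduced complex is generated by $v_-$ in the unique crossingless resolution.

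For the inductive step I would feed the inductively chosen cycles $e_w$, $w\in\{0,1\}^{n-1}$ — each supported on the symmetric resolution $D_w$ of $\widetilde{U_{n-1}}$, with the $[e_w]$ spanning $\Khr(\widetilde{U_{n-1}})$ — into the short exact sequence of Proposition \ref{hopfadd} with $L=\widetilde{U_{n-1}}$, using the explicit form of the chain maps $\alpha$ and $\beta$ from Figure \ref{ab*}. On the one hand, $\alpha$ includes $\operatorname{CKhr}(\widetilde{U_{n-1}})$ as the subcomplex in which both new crossings are $1$-smoothed (so the new meridian splits off as a separate circle not containing $p$) and that circle carries $v_+$; hence each $\alpha(e_w)$ is a cycle of $\widetilde{U_n}$ supported on a single symmetric resolution, and since $\alpha_*$ is injective the classes of these elements span $\operatorname{im}\alpha_*$. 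On the other hand, on the symmetric resolution in which both new crossings are $0$-smoothed, consider the state equal to $e_w$ away from the new circle and carrying $v_-$ on it; since $de_w=0$ and since the two differential components flipping the new crossings each merge the new circle into the pointed circle, where Khovanov's merge map gives $m(v_-,v_-)=0$, this state is again a cycle living in a single symmetric resolution, and the description of $\beta$ (capping off the new circle) shows its class is sent to $[e_w]$ by $\beta_*$. Writing $e_v$ for these $2^n$ cycles, the half coming from $\alpha$ span $\operatorname{im}\alpha_*$ while the $\beta_*$-images of the other half span $\operatorname{coker}\alpha_*\cong\Khr(\widetilde{U_{n-1}})$, so the $[e_v]$ span $\Khr(\widetilde{U_n})$; as there are exactly $2^n=\rankk_{\ZZ}\Khr(\widetilde{U_n})$ of them, they form a basis and the induction closes. (This even makes $e_v$ explicit: it carries $v_-$ on the pointed component and carries $v_+$ or $v_-$ on the $k$-th of the remaining components according to whether the $k$-th entry of $v$ is $1$ or $0$.)

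The main difficulty is bookkeeping rather than algebra. One must check that the abstract connected-sum decomposition $\widetilde{U_n}=\widetilde{U_{n-1}}\#H$ is realized by the concrete diagram and crossing order fixed at the beginning of the section — so that the inductively built classes genuinely live in the $2^n$ symmetric resolutions of that diagram — and that $\alpha$ together with the chosen lift of $\beta$, read off Figure \ref{ab*}, really do carry states supported on symmetric resolutions of $\widetilde{U_{n-1}}$ to states supported on symmetric resolutions of $\widetilde{U_n}$ while preserving the cycle condition. Once these local verifications are in place the statement follows from the dimension count, with no separate linear-independence argument needed.
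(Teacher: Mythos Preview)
Your overall plan---induction on $n$ via the short exact sequence of Proposition~\ref{hopfadd}, with $\alpha_*$ handling the $v_n=1$ bit and a lift through $\beta_*$ handling $v_n=0$---is exactly the paper's strategy, and your treatment of the $v_n=1$ case is correct.

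The gap is in your $v_n=0$ case. The assertion that ``the two differential components flipping the new crossings each merge the new circle into the pointed circle'' is false for $n\ge 2$. In the standard concentric diagram, the outermost arc of the augmenting circle (the arc lying beyond strand $n-1$) belongs, after resolving along $v'$, to the $(n-1)$-th \emph{unpointed} circle, not to the pointed one; this is what the $\alpha_*$ picture in Figure~\ref{ab*} actually shows once you unpack it for this diagram. Hence flipping one of the two new crossings merges the $n$-th circle with the $(n-1)$-th circle. If the $(n-1)$-th circle carries $v_+$ in $e_{v'}$, the merge $m(v_+,v_-)=v_-$ does not vanish and your candidate $e_{v'}\otimes v_-$ is not closed. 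The first instance is $n=2$, $v=(1,0)$: your recipe gives $v_+\otimes v_-$, which is not a cycle; the paper takes $e_{(10)}=v_+\otimes v_-+v_-\otimes v_+$ instead (and uses this explicitly later). Your closing parenthetical formula is therefore incorrect.

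The paper's fix is to write $e_{v'}=A\otimes v_++B\otimes v_-$ (last factor on the $(n-1)$-th circle) and set
\[
e_v=e_{v'}\otimes v_-+A\otimes v_-\otimes v_+,
\]
so that the two contributions to the merge of circles $n$ and $n{-}1$ cancel over $\ZZ$. The cost is that the \emph{other} differentials no longer vanish for free, since the correction term is not of the form $(\text{cycle})\otimes(\text{label})$; this is precisely why the paper then does a case analysis on $v_{n-1}$. So the ``bookkeeping'' you flag at the end is not merely bookkeeping: it is where the naive lift fails, and where a genuinely new term must be introduced.
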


\begin{proof}
	We prove the proposition by induction. There is nothing to say for $n=0$. In general, by applying proposition \ref{hopfadd} to $L=\widetilde{U_{n-1}}$ and $L\#H=\widetilde{U_n}$, we obtain a short exact sequence \[0\to\Khr^{i-1,j-2}(\widetilde{U_{n-1}})\overset{\alpha_*}{\to}\Khr^{i+1,j+3}(\widetilde{U_n})\overset{\beta_*}{\to}\Khr^{i+1,j+2}(\widetilde{U_{n-1}})\to 0.\]Let $v=(v_1,v_2,\dots,v_n)\in\{0,1\}^n$ and let $v'=(v_1,\dots,v_{n-1})$. The sequence $v'$ corresponds to a symmetric resolution $R'_{v'}$ of $\widetilde{U_{n-1}}$. If $v_n=1$, we just need to take \[ e_v=\alpha_*(e_{v'})=e_{v'}\otimes v_+. \]
	
	If $v_n=0$ and $e_{v'}=A\otimes v_++B\otimes v_-$, here $v_\pm$ are associated to the $(n-1)$-th circle, we take \[e_v=e_{v'}\otimes v_-+A\otimes v_-\otimes v_+.\]It is clear that $\beta_*(e_v)=e_{v'}$ and it remains to show that $e_v$ is a cycle. Notice that the cobordism $(v',\bullet)$ is always a merging (rather than a splitting) of circles, and the construction ensures that $\Khr((v',\bullet))(e_v)=0$. We show that other cobordisms also vanish by discussing the value of $v_{n-1}$, see Figure \ref{vn-1}.
	
	\begin{figure}[hbtp]
		\begin{tikzpicture}
			
			\begin{knot}[scale=2,ignore endpoint intersections=false,clip width=5,clip radius=6pt,looseness=1.2,xshift=-1cm,yshift=0cm]
				
				\strand[thick] (0,0) circle[radius=0.4];
				
				\strand[thick] (0,0) circle[radius=0.32];
				
				\strand[thick] (0,0) to[out=up,in=left] (0.2,0.2) to[out=right,in=up] (0.6,0) to[out=down,in=right] (0.2,-0.2) to[out=left,in=down] (0,0);
				
				\flipcrossings{2,4};
				
				\node at (0,-0.6) {$v_{n-1}=1$};
				\node[scale=0.5] at (0.2,0) {$\dots$};
				\node[scale=0.5] at (-0.2,0) {$\dots$};
				
			\end{knot}
			\draw[fill=black,scale=2,xshift=-1cm] (0,0) circle[radius=0.03];
			\draw[scale=2,thick,green,xshift=-1cm] (0.23,0.15)--(0.23,0.3);
			\draw[scale=2,thick,green,xshift=-1cm] (0.23,-0.13)--(0.23,-0.28);
			\draw[scale=2,thick,green,xshift=-1cm] (0.42,0.15)--(0.27,0.24);
			\draw[scale=2,thick,green,xshift=-1cm] (0.42,-0.15)--(0.27,-0.24);
			
			\begin{knot}[scale=2,ignore endpoint intersections=false,clip width=5,clip radius=6pt,looseness=1.2,xshift=1cm,yshift=0cm]
				
				\strand[thick] (0,0) circle[radius=0.4];
				
				\strand[thick] (0,0) circle[radius=0.32];
				
				\strand[thick] (0,0) to[out=up,in=left] (0.2,0.2) to[out=right,in=up] (0.6,0) to[out=down,in=right] (0.2,-0.2) to[out=left,in=down] (0,0);
				
				\flipcrossings{2,4};
				
				\node at (0,-0.6) {$v_{n-1}=0$};
				\node[scale=0.5] at (0.2,0) {$\dots$};
				\node[scale=0.5] at (-0.2,0) {$\dots$};
				
			\end{knot}
			\draw[fill=black,scale=2,xshift=1cm] (0,0) circle[radius=0.03];
			\draw[scale=2,thick,green,xshift=1cm] (0.32,0.15)--(0.17,0.24);
			\draw[scale=2,thick,green,xshift=1cm] (0.32,-0.15)--(0.17,-0.24);
			\draw[scale=2,thick,green,xshift=1cm] (0.42,0.15)--(0.27,0.24);
			\draw[scale=2,thick,green,xshift=1cm] (0.42,-0.15)--(0.27,-0.24);
		\end{tikzpicture}
		\caption{Possible resolutions with $v_n=0$.}
		\label{vn-1}
	\end{figure}
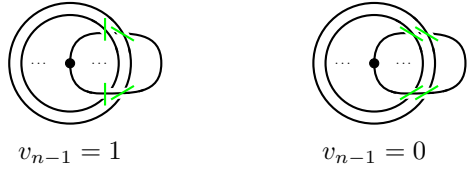

Notice that the cobordism map that the change happens on the $i$-th strand ($1\le i\le n-2$) vanishes on $A,B$. Hence if $v_{n-1}=1$, then there is no possibly non-vanishing cobordism map. Now assume that $v_{n-1}=0$ and let $v''=(v_1,\dots,v_{n-2})$, \[e_{v''}=A_1\otimes v_++B_1\otimes v_-.\] Then we have\[e_{v'}=(A_1\otimes v_++B_1\otimes v_-)\otimes v_-+A_1\otimes v_-\otimes v_+,\]\[e_v=((A_1\otimes v_++B_1\otimes v_-)\otimes v_-+A_1\otimes v_-\otimes v_+)\otimes v_-+A_1\otimes v_-\otimes v_-\otimes v_+,\]and hence $\Khr((v'',\bullet,1))(e_v)=0$.
\end{proof}

We now construct an explicit identification between $\AKh(U_n)$ and $\Khr(\widetilde{U_n})$. On the level of homology, this is quite easy: the Poincar\'e polynomial of $\AKh(U_n)$ is given by \[P(U_n)=(fq+f^{-1}q^{-1})^n.\]Here the $f$-grading is indicated by $f$. The substitution $f\mapsto tq$ gives an isomorphism between $\AKh(U_n)$ and $\Khr(U_n')$ (up to shifting). More concretely, the generator \[w=w^{(1)}_\pm\otimes w^{(2)}_\pm\otimes\dots\otimes w^{(n)}_\pm\in \AKh(U_n)\] is identified with the generator corresponding to the symmetric resolution of label $(v_1,v_2,\dots,v_n)$, where $v_i=1$ if and only if $w^{(i)}_+$ appears in $w$ ($i=1,2,\dots,n$), as in Proposition \ref{hopfsym}.  

The effect of adding a trivial unknot component to $U_n$ is just taking two copies of the original homology groups with generators tensoring with $v_\pm$ respectively, by K\"unneth formula. We summarize the consequence of this subsection in the following form.

\begin{thm}\label{idtphi}
	Let $L$ be an annular unlink with $n$ nontrivial unknot components, and let $\widetilde{L}$ be its augmentation. Then there is an isomorphism $\Phi_L$ between the annular Khovanov homology of $L$ and the reduced Khovanov homology of $\widetilde{L}$. More precisely, we have an isomorphism \[\Phi_L\colon\AKh^{i,j,k}(L) \to\Khr^{i+k+n,j+k+3n}(\widetilde{L}).\] The correspondence of generators is given above.\qed
\end{thm}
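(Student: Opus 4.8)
The plan is to prove the statement first for the pure annular unlink $U_n$ (no trivial components), where both sides can be made completely explicit, and then to bootstrap to the general case by splitting off the trivial components. For $U_n$ itself, take the crossingless diagram consisting of $n$ nested nontrivial circles: the annular complex has vanishing differential and is literally $W^{\otimes n}$ with its tensor-product tri-grading, so $\AKh(U_n)$ is free of rank $2^n$, supported in homological degree $0$, with $w^{(1)}_{\epsilon_1}\otimes\cdots\otimes w^{(n)}_{\epsilon_n}$ in tri-degree $(0,2\ell-n,2\ell-n)$, where $\ell$ is the number of $+$ signs. On the augmented side, Proposition \ref{hopfsym} already supplies a generating set $\{e_v:v\in\{0,1\}^n\}$ of $\Khr(\widetilde{U_n})$; I would first upgrade ``generating set'' to ``$\ZZ$-basis'' by running one more induction on the short exact sequence of Proposition \ref{hopfadd}. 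Since every group in that sequence is free by the inductive hypothesis, the sequence splits; the $e_v$ with $v_n=1$ are $\alpha_*$ applied to the basis $\{e_{v'}\}$ of $\Khr(\widetilde{U_{n-1}})$, hence a basis of $\operatorname{im}\alpha_*$, while those with $v_n=0$ are mapped by $\beta_*$ bijectively onto the basis $\{e_{v'}\}$ of $\Khr(\widetilde{U_{n-1}})$; so the $2^n$ elements $e_v$ form a basis, matching the monomial count of $P(\widetilde{U_n})=(tq^3)^n(tq^2+t^{-1}q^{-2})^n$.

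Next I would track the bidegree of $e_v$ through the same recursion, reading the shifts off Proposition \ref{hopfadd}: appending a $1$ to $v$ (applying $\alpha_*$) raises $(i,j)$ by $(2,5)$, and appending a $0$ (taking the $\beta_*$-preimage provided by the splitting) raises $(i,j)$ by $(0,1)$. Since $e_{()}$ sits in degree $(0,0)$ for the unknot $\widetilde{U_0}$, an $e_v$ with $|v|=\ell$ lies in bidegree $(2\ell,\,4\ell+n)$. Comparing with the tri-degree $(0,2\ell-n,2\ell-n)$ of the matching generator $w$ (with $v_i=1\iff\epsilon_i=+$), the assignment $w\mapsto e_v$ is a bijection of bases carrying $\AKh^{i,j,k}(U_n)$ onto $\Khr^{i+k+n,\,j+k+3n}(\widetilde{U_n})$, which is exactly the claimed $\Phi_{U_n}$.

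For a general annular unlink $L$ with $n$ nontrivial and $m$ trivial unknot components, the trivial components bound disjoint disks in $A\times I$ disjoint from the core, so $\widetilde{L}$ is the split union of $\widetilde{U_n}$ with $m$ unknots; by the K\"unneth formula both $\AKh(L)$ and $\Khr(\widetilde{L})$ acquire a tensor factor $V^{\otimes m}$ concentrated in $f$-degree $0$, and I would set $\Phi_L:=\Phi_{U_n}\otimes\idd_{V^{\otimes m}}$, checking that the shift formula $\AKh^{i,j,k}\to\Khr^{i+k+n,\,j+k+3n}$ survives --- it does, because $n$ counts only the nontrivial components and the extra factor shifts the quantum grading the same way on both sides. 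The one genuinely delicate point is the bidegree bookkeeping in the middle step: because the recursion of Proposition \ref{hopfsym} is not a plain tensor product when $v_n=0$ (there is the correction term $A\otimes v_-\otimes v_+$) and the complexes $\CKhr(\widetilde{U_n})$ and $\CKhr(\widetilde{U_{n-1}})$ carry different global grading shifts, the degree of $e_v$ must be extracted from the bidegrees of $\alpha_*$ and $\beta_*$ rather than read naively off the chain-level formula; one should also confirm that each $e_v$ is bihomogeneous. Everything else is routine bookkeeping.
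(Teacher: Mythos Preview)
Your proposal is correct and follows essentially the same route as the paper: both arguments rest on Proposition~\ref{hopfsym} for the explicit cycles $e_v$, the short exact sequence of Proposition~\ref{hopfadd}, and a K\"unneth factor for the trivial components. The only difference is presentational: the paper reads off the degree match by comparing the Poincar\'e polynomials $P(\widetilde{U_n})=(tq^3)^n(tq^2+t^{-1}q^{-2})^n$ and $P(U_n)=(fq+f^{-1}q^{-1})^n$ via $f\mapsto tq$, whereas you track the bidegree of each $e_v$ through the $\alpha_*/\beta_*$ recursion and verify $(i,j,k)\mapsto(i+k+n,j+k+3n)$ directly; your explicit upgrade of ``generating set'' to ``basis'' is likewise implicit in the paper (over the field $\ZZ$, $2^n$ generators in a $2^n$-dimensional space are automatically a basis).
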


\subsection{Functority}

A cobordism between annular links naturally induces a cobordism between their augmentations. In this subsection, we show that the isomorphism $\Phi_L$ defined in Theorem \ref{idtphi} is compatible with cobordisms. According to our purpose (see the next section), we don't need to deal with the Reidemeister moves on the diagram of $L$, and we concentrate on Morse moves, i.e. the merging and splitting of circles. We first verify the compatibility with only related circles and then consider the effect of adding other unlink components. There are four cases we need to discuss:

\begin{enumerate}[a.]
	\item one trivial circle and one nontrivial circle merge into a nontrivial circle;
	
	\item one nontrivial circle splits into a trivial circle and a nontrivial circle;
	
	\item two nontrivial circles merge into a trivial circle;
	
	\item one trivial circle splits into two nontrivial circles.
\end{enumerate}

Since the homomorphisms induced by cobordisms are well-defined \cite{Jacobsson2004AnIO}, we may choose specific link diagrams to calculate them. Case a. and b. are simple diagram chasing. The following diagram illustrates this process.

	\begin{figure}[hbtp]
	\begin{tikzpicture}
		
		\draw[->] (-1,2)--(1,2) ;\node at (0,2.25) {$\Phi_{L_1}$};
		\draw[->] (-1,0)--(1,0) ;\node at (0,-0.25) {$\Phi_{L_2}$};
		\draw[->] (-3,1.5)--(-3,0.5) ;
		\draw[->] (3,1.5)--(3,0.5) ;
		
		\node at (-4,2) {$\AKh($};
		\begin{knot}[ignore endpoint intersections=false,clip width=5,clip radius=6pt,looseness=1.2,xshift=-3cm,yshift=2cm]
			\strand[thick] (-0.25,0) circle[radius=0.3];
			
			\strand[thick] (0.5,0) circle[radius=0.3];
			
			\draw[fill=black] (0.5,0) circle[radius=0.04];

		\end{knot}
		\node at (-2.1,2) {$)$};

		\node at (2,2) {$\Khr($};
		\begin{knot}[ignore endpoint intersections=false,clip width=5,clip radius=6pt,looseness=1.2,xshift=3cm,yshift=2cm]
			\strand[thick] (-0.25,0) circle[radius=0.3];
			\strand[thick] (0.5,0) circle[radius=0.3];
			\strand[thick] (0.5,0) to[out=up,in=left] (0.7,0.2) to[out=right,in=up] (1.1,0) to[out=down,in=right] (0.7,-0.2) to[out=left,in=down] (0.5,0);
			
			\draw[fill=black] (0.5,0) circle[radius=0.04];
			
			\flipcrossings{2};
		\end{knot}
		\node at (4.2,2) {$)$};
		
		\node at (-4,0) {$\AKh($};
		\begin{knot}[ignore endpoint intersections=false,clip width=5,clip radius=6pt,looseness=1.2,xshift=-3cm,yshift=0cm]

			\draw[thick] (0,0.15) arc (30:330:0.3);
			
			\draw[thick] (0.25,0.15) arc (150:-150:0.3);
			
			\strand[thick] (0,0.15) to[out=-60,in=-120] (0.25,0.15);
			
			\strand[thick] (0,-0.15) to[out=60,in=120] (0.25,-0.15);
			
			\draw[fill=black] (0.5,0) circle[radius=0.04];
		\end{knot}
		\node at (-2.1,0) {$)$};

		\node at (2,0) {$\Khr($};
		\begin{knot}[ignore endpoint intersections=false,clip width=5,clip radius=6pt,looseness=1.2,xshift=3cm,yshift=0cm]
			\draw[thick] (0,0.15) arc (30:330:0.3);

			\strand[thick] (0.25,0.15) to[out=60,in=left] (0.5,0.3) to[out=right,in=up] (0.8,0) to[out=down,in=right] (0.5,-0.3) to[out=left,in=-60] (0.25,-0.15);
			
			\strand[thick] (0,0.15) to[out=-60,in=-120] (0.25,0.15);
			
			\strand[thick] (0,-0.15) to[out=60,in=120] (0.25,-0.15);
			
			\strand[thick] (0.5,0) to[out=up,in=left] (0.7,0.2) to[out=right,in=up] (1.1,0) to[out=down,in=right] (0.7,-0.2) to[out=left,in=down] (0.5,0);
			
			\draw[fill=black] (0.5,0) circle[radius=0.04];
			
			\flipcrossings{4};
		\end{knot}
		\node at (4.2,0) {$)$};
		
		\node at (-1.5,1.5) {$v_-\otimes w_+$};
		
		\node at (-1.5,0.5) {$0$};
		
		\node at (1.5,1.5) {$v_-\otimes v_+$};
		
		\node at (1.5,0.5) {$0$};
		
		\draw[|->] (-0.75,1.5)--(0.75,1.5);
		\draw[|->] (-0.75,0.5)--(0.75,0.5);
		\draw[|->] (-1.5,1.25)--(-1.5,0.75);
		\draw[|->] (1.5,1.25)--(1.5,0.75);
		
	\end{tikzpicture}
	\caption{Case a..}
	\label{casea}
	\end{figure}

In the case c. and d., we need to check the following diagrams commute.

	\begin{figure}[hbtp]
	\centering
	\begin{tikzpicture}
		
		\draw[->] (-5.5,1.5)--(-4.5,1.5) ;\node at (-5,1.75) {$\Phi_{L_3}$};
		\draw[->] (-5.5,0)--(-4.5,0) ;\node at (-5,-0.25) {$\Phi_{L_4}$};
		\draw[->] (-7,1)--(-7,0.5) ;
		\draw[->] (-3,1)--(-3,0.5) ;
		
		\node at (-7.5,1.5) {$\AKh($};
		\begin{knot}[ignore endpoint intersections=false,clip width=5,clip radius=6pt,looseness=1.2,xshift=-6.5cm,yshift=1.5cm]
			\strand[thick] (0,0) circle[radius=0.4];
			
			\strand[thick] (0,0) circle[radius=0.2];

		\end{knot}
		\node at (-5.8,1.5) {$)$};
		\node at (-4,1.5) {$\Khr($};

		\begin{knot}[ignore endpoint intersections=false,clip width=5,clip radius=6pt,looseness=1.2,xshift=-3cm,yshift=1.5cm]
			\strand[thick] (0,0) circle[radius=0.4];
			
			\strand[thick] (0,0) circle[radius=0.2];
			
			\strand[thick] (0,0) to[out=up,in=left] (0.2,0.2) to[out=right,in=up] (0.6,0) to[out=down,in=right] (0.2,-0.2) to[out=left,in=down] (0,0);
			
			\flipcrossings{2,4};
			
		\end{knot}
		\node at (-2,1.5) {$)$};
		\draw[fill=black] (-3,1.5) circle[radius=0.04];
		
		\node at (-7.5,0) {$\AKh($};
		\begin{knot}[ignore endpoint intersections=false,clip width=5,clip radius=6pt,looseness=1.2,xshift=-6.5cm,yshift=0cm]
			\strand[thick,domain=150:-150] plot ({0.4*cos(\x)}, {0.4*sin(\x)});
			
			\strand[thick,domain=150:-150] plot ({0.2*cos(\x)}, {0.2*sin(\x)});
			
			\draw[thick] (150:0.4) to[out=240,in=240] (150:0.2);
			
			\draw[thick] (210:0.4) to[out=120,in=120] (210:0.2);
			
			\draw[fill=black] (0,0) circle[radius=0.04];
		\end{knot}
		\node at (-5.8,0) {$)$};
		\node at (-4,0) {$\Khr($};
		\begin{knot}[ignore endpoint intersections=false,clip width=5,clip radius=6pt,looseness=1.2,xshift=-3cm,yshift=0cm]
			\strand[thick,domain=150:-150] plot ({0.4*cos(\x)}, {0.4*sin(\x)});
			
			\strand[thick,domain=150:-150] plot ({0.2*cos(\x)}, {0.2*sin(\x)});
			
			\draw[thick] (150:0.4) to[out=240,in=240] (150:0.2);
			
			\draw[thick] (210:0.4) to[out=120,in=120] (210:0.2);
			
			\strand[thick] (0,0) to[out=up,in=left] (0.2,0.2) to[out=right,in=up] (0.6,0) to[out=down,in=right] (0.2,-0.2) to[out=left,in=down] (0,0);
			
			\flipcrossings{2,4};
		\end{knot}
		\node at (-2,0) {$)$};
		\draw[fill=black] (-3,0) circle[radius=0.04];
		\draw[->] (1,1.5)--(2,1.5) ;\node at (1.5,1.75) {$\Phi_{L_4}$};
		\draw[->] (1,0)--(2,0) ;\node at (1.5,-0.25) {$\Phi_{L_3}$};
		\draw[->] (-0.5,1)--(-0.5,0.5) ;
		\draw[->] (3.5,1)--(3.5,0.5) ;
		
		\node at (-1,1.5) {$\AKh($};
		\begin{knot}[ignore endpoint intersections=false,clip width=5,clip radius=6pt,looseness=1.2,xshift=0cm,yshift=0cm]
			\strand[thick] (0,0) circle[radius=0.4];
			
			\strand[thick] (0,0) circle[radius=0.2];
			
			\draw[fill=black] (0,0) circle[radius=0.04];

		\end{knot}
		\node at (0.7,1.5) {$)$};
		\node at (2.5,1.5) {$\Khr($};
		\begin{knot}[ignore endpoint intersections=false,clip width=5,clip radius=6pt,looseness=1.2,xshift=3.5cm,yshift=0cm]
			\strand[thick] (0,0) circle[radius=0.4];
			
			\strand[thick] (0,0) circle[radius=0.2];
			
			\strand[thick] (0,0) to[out=up,in=left] (0.2,0.2) to[out=right,in=up] (0.6,0) to[out=down,in=right] (0.2,-0.2) to[out=left,in=down] (0,0);
			
			\flipcrossings{2,4};
			
		\end{knot}
		\node at (4.5,1.5) {$)$};
		\draw[fill=black] (3.5,0) circle[radius=0.04];
		\node at (-1,0) {$\AKh($};
		\begin{knot}[ignore endpoint intersections=false,clip width=5,clip radius=6pt,looseness=1.2,xshift=0cm,yshift=1.5cm]
			\strand[thick,domain=150:-150] plot ({0.4*cos(\x)}, {0.4*sin(\x)});
			
			\strand[thick,domain=150:-150] plot ({0.2*cos(\x)}, {0.2*sin(\x)});
			
			\draw[thick] (150:0.4) to[out=240,in=240] (150:0.2);
			
			\draw[thick] (210:0.4) to[out=120,in=120] (210:0.2);
			
			\draw[fill=black] (0,0) circle[radius=0.04];
		\end{knot}
		\node at (0.7,0) {$)$};

		\node at (2.5,0) {$\Khr($};
		\begin{knot}[ignore endpoint intersections=false,clip width=5,clip radius=6pt,looseness=1.2,xshift=3.5cm,yshift=1.5cm]
			\strand[thick,domain=150:-150] plot ({0.4*cos(\x)}, {0.4*sin(\x)});
			
			\strand[thick,domain=150:-150] plot ({0.2*cos(\x)}, {0.2*sin(\x)});
			
			\draw[thick] (150:0.4) to[out=240,in=240] (150:0.2);
			
			\draw[thick] (210:0.4) to[out=120,in=120] (210:0.2);
			
			\strand[thick] (0,0) to[out=up,in=left] (0.2,0.2) to[out=right,in=up] (0.6,0) to[out=down,in=right] (0.2,-0.2) to[out=left,in=down] (0,0);
			
			\flipcrossings{2,4};
		\end{knot}
		\draw[fill=black] (3.5,1.5) circle[radius=0.04];
		\node at (4.5,0) {$)$};
	\end{tikzpicture}
	\caption{Case c. and d..}
	\label{casecd}
\end{figure}

Denote the upper and the lower links in the leftmost column of Diagram \ref{casecd} by $L_3,L_4$, respectively. We have \begin{align*}
	\Khr(\widetilde{L_3})&\cong (\ZZ)^{(0,2)}\oplus((\ZZ)^{(2,6)})^{\oplus 2}\oplus(\ZZ)^{(4,10)},\\
	\Khr(\widetilde{L_4})&\cong (\ZZ)^{(0,1)}\oplus (\ZZ)^{(0,-1)}.
\end{align*}

We first check case c.. Notice that the cobordism map $\Khr(\widetilde{L_3})\to \Khr(\widetilde{L_4})$ is of degree $(-2,-7)$, the only possibly nontrivial map is \[((\ZZ)^{(2,6)})^{\oplus 2}\to (\ZZ)^{(0,-1)},\]which corresponds to the merging map in the leftmost column of Diagram \ref{casecd}:\[w_+\otimes w_-,\ w_-\otimes w_+\mapsto v_-.\]By the algorithm given in Theorem \ref{idtphi}, $w_-\otimes w_+$ and $w_+\otimes w_-$ correspond to $v_-\otimes v_+$ (associated to the symmetric resolution $(01)$) and $v_+\otimes v_-+v_-\otimes v_+$ (associated to the symmetric resolution $(10)$) respectively. Images of them are $v_-\otimes v_-\otimes v_+$ and $v_-$ respectively. It suffices to show they are non-vanishing and cohomologous. Denote the crossing number $1$ to $4$ as in Figure \ref{label1234} 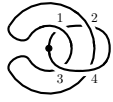
\begin{figure}[hbtp]
\begin{tikzpicture}
	\begin{knot}[ignore endpoint intersections=false,clip width=5,clip radius=6pt,looseness=1.2,xshift=0cm,yshift=0cm]
		\strand[thick,domain=150:-150] plot ({0.6*cos(\x)}, {0.6*sin(\x)});
		
		\strand[thick,domain=150:-150] plot ({0.3*cos(\x)}, {0.3*sin(\x)});
		
		\draw[thick] (150:0.6) to[out=240,in=240] (150:0.3);
		
		\draw[thick] (210:0.6) to[out=120,in=120] (210:0.3);
		
		\strand[thick] (0,0) to[out=up,in=left] (0.4,0.3) to[out=right,in=up] (0.8,0) to[out=down,in=right] (0.4,-0.3) to[out=left,in=down] (0,0);
		
		\flipcrossings{2,4};
	\end{knot}
	\draw[fill=black] (0,0) circle[radius=0.04];
	\node[scale=0.5] at (0.15,0.4) {$1$};
	\node[scale=0.5] at (0.6,0.4) {$2$};
	\node[scale=0.5] at (0.15,-0.4) {$3$};
	\node[scale=0.5] at (0.6,-0.4) {$4$};
\end{tikzpicture}
\caption{The label of crossings on $\widetilde{L_4}$.}
\label{label1234}
\end{figure}and take the lexicographical order on the resolutions (i.e. take the states associated to the resolution $(1100)$ first, then $(1010)$, etc., and the explicit order of basis can be read from texts), and we write down differentials in matrix. Here we denote the bases of chain groups $\operatorname{CKhr}^{(-1,-1)}(\widetilde{L_4})$, $\operatorname{CKhr}^{(0,-1)}(\widetilde{L_4})$ by $e_i$($1\le i\le 6$), $f_j$($1\le j\le 8$), respectively. We have\begin{align*}
	\operatorname{d}^{(-1,-1)}&=\begin{pmatrix}
		1&1&0&0&0&0\\
		1&0&0&1&0&0\\
		1&0&0&0&1&0\\
		0&0&1&1&0&0\\
		0&1&0&0&1&0\\
		0&0&1&0&0&1\\
		0&0&1&0&1&0\\
		0&0&0&1&0&1\\		
	\end{pmatrix},\\
	\operatorname{d}^{(0,-1)}&=\begin{pmatrix}
		1&0&1&0&1&0&0&0\\
		0&0&0&1&0&1&0&1\\
	\end{pmatrix}.
\end{align*}Elements  $v_-$ and $v_-\otimes v_-\otimes v_+$ correspond to the vector $f_2$ and $f_7$, respectively. It is easy to see that $f_2-f_7=\operatorname{d}^{(-1,-1)}(e_1+e_2+e_5)$ and $f_2\notin \operatorname{Im}\operatorname{d}^{(-1,-1)}$. This finishes the verification in case c..

The verification in case d. is essentially the same. The only possibly nontrivial map in the rightmost column of Diagram \ref{casecd} is \[(\ZZ)^{(0,1)}\to ((\ZZ)^{(2,6)})^{\oplus 2},\]which corresponds to the splitting map \[v_+\mapsto w_+\otimes w_-+w_-\otimes w_+\]in the third column of Diagram \ref{casecd}. We take bases of $\operatorname{CKhr}^{(-1,1)}(\widetilde{L_4}),\operatorname{CKhr}^{(0,1)}(\widetilde{L_4})$ as in case c., and we have \begin{align*}
	\operatorname{d}^{(0,1)}&=\begin{pmatrix}
		1&1&0&1&0&0&0&0\\	
		0&1&1&0&0&0&0&1\\	
		1&0&1&0&0&0&1&0\\	
		1&0&0&0&0&1&0&0\\	
		0&0&0&1&0&0&1&1\\	
		0&0&0&0&1&0&0&1\\	
	\end{pmatrix},\\
	\operatorname{d}^{(-1,1)}&=\begin{pmatrix}
		1&0&0&1&0&1&1&0\\
		0&0&1&0&1&0&1&1\\
	\end{pmatrix}^{T}.
\end{align*}The generator of $\Khr^{(0,1)}(\widetilde{L_4})$ can be represented by $v^{(0,1)}=(1,1,0,0,1,1,1,1)^T$, and we have \[\Khr(\widetilde{L_4})\to \Khr(\widetilde{L_3})\colon v^{(0,1)}\mapsto(1,0,0,0,1,0,1,1)^T.\]The boundary subgroup of degree $(2,6)$ is spanned by the image of \[
\operatorname{d}^{(1,6)}=\begin{pmatrix}
1&1&0&0\\
1&0&1&0\\
1&0&0&1\\
0&1&1&0\\
0&0&0&0\\
0&1&0&1\\
0&1&0&1\\
0&0&1&1\\		
\end{pmatrix}.\]Therefore, under the map $\Khr(\widetilde{L_4})\to\Khr( \widetilde{L_3})$, we have\begin{align*}
v^{(0,1)}\mapsto\Phi_{L_3}( w_+\otimes w_-+w_-\otimes w_+)+\operatorname{d}^{(1,6)}(0,1,1,0)^T.
\end{align*}This completes the verification in case d..

It remains to consider the effect of adding a new unlink component to the cobordism. The case of adding a trivial unknot component is trivial and we assume that the additional unknot component is nontrivial. Let $L_1,L_2$ be two annular unlinks and let $\rho\colon L_1\to L_2$ be a cobordism obtained by a Morse move. We have \[\AKh(\rho\coprod\idd)=\AKh(\rho)\otimes\idd_{U},\]here $U=U_1$ is the nontrivial annular unknot. Take $S\in\AKh(L_1)$ and let $T=\AKh(\rho)(S)$. By Proposition \ref{hopfadd} and Theorem \ref{idtphi}, the following diagram commutes. 	
\begin{center}
	\begin{tikzcd}
		S\otimes w_+\arrow[r,mapsto,"\Phi_{L_1}"]\arrow[d,mapsto,"\AKh(\rho)"']& \Phi_{L_1}(S)\otimes v_+\arrow[d,mapsto,"\Khr(\rho')"]\\
		T\otimes w_+\arrow[r,mapsto,"\Phi_{L_2}"']&\Phi_{L_2}(S)\otimes v_+
	\end{tikzcd}.
\end{center}Assume that $\Phi_{L_1}(S)=A\otimes v_++B\otimes v_-$ and $\Phi_{L_2}(T)=C\otimes v_++D\otimes v_-$. By Proposition \ref{hopfadd} and Theorem \ref{idtphi}, the following diagram commutes, which completes the proof. 
\begin{center}
\begin{tikzcd}
	S\otimes w_-\arrow[r,mapsto,"\Phi_{L_1}"]\arrow[d,mapsto,"\AKh(\rho)"']& \Phi_{L_1}(S)\otimes v_-+A\otimes v_-\otimes v_+\arrow[d,mapsto,"\Khr(\widetilde{\rho})"]\\
	T\otimes w_-\arrow[r,mapsto,"\Phi_{L_2}"']&\Phi_{L_2}(T)\otimes v_-+C\otimes v_-\otimes v_+
\end{tikzcd}.
\end{center}

In summary, we have shown the following theorem. Roughly speaking, it gives a natural isomorphism between two cohomology theories on annular unlinks.

\begin{thm}\label{canonicaliso}
	Let $L_1,L_2$ be two annular unlinks and let $\rho\colon L_1\to L_2$ be a cobordism obtained by composition of Morse moves. The cobordism $\rho$ induces a cobordism $\widetilde{\rho}$ between the augmentations $\widetilde{L_1}$ and $\widetilde{L_2}$. Let $\Phi_{L_1},\Phi_{L_2}$ be  isomorphisms given in Theorem \ref{idtphi}. Then the following diagram commutes.
	\begin{center}
		\begin{tikzcd}
			\AKh(L_1)\arrow[r,"\Phi_{L_1}"]\arrow[d,"\AKh(\rho)"']& \Khr(\widetilde{L_1})\arrow[d,"\Khr(\widetilde{\rho})"]\\
			\AKh(L_2)\arrow[r,"\Phi_{L_2}"']&\Khr(\widetilde{L_2})
		\end{tikzcd}.
	\end{center}\qed
\end{thm}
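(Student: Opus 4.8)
The plan is to reduce to elementary Morse moves: since $\AKh$ and $\Khr$ are functorial, and $\widetilde{\rho_1\circ\rho_2}=\widetilde{\rho_1}\circ\widetilde{\rho_2}$, it suffices to verify the commuting square when $\rho$ is a single merge or split of circles. Because $\Phi_L$ is assembled circle-by-circle — tensoring with $v_\pm$ for trivial circles via the K\"unneth formula, and using the Hopf-addition sequence of Proposition~\ref{hopfadd} for the nontrivial ones, exactly as in the proof of Proposition~\ref{hopfsym} — any Morse move involving only trivial circles reduces to the corresponding naturality statement for Khovanov's original TQFT, which is immediate. So I only need to treat the four cases a--d in which a nontrivial circle is created or destroyed.

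For cases a and b (a trivial and a nontrivial circle merging, and the reverse split) the verification is a direct diagram chase: on the annular side the relevant map is $v_+\otimes w_\pm\mapsto w_\pm$, $v_-\otimes w_\pm\mapsto 0$ (resp.\ $w_\pm\mapsto v_-\otimes w_\pm$), and one matches this against the image under $\Phi$ using the explicit generator correspondence of Theorem~\ref{idtphi}; the resulting square is Figure~\ref{casea}. For cases c and d (two nontrivial circles merging to a trivial one, and a trivial circle splitting into two nontrivial ones) I would pass to a fixed minimal diagram of the relevant augmented unlink — the two-component model $\widetilde{L_4}$ of Figure~\ref{label1234}, with crossings ordered and resolutions taken lexicographically — write out the pertinent pieces of the reduced Khovanov differential as explicit matrices over $\ZZ$, and then check that the images under $\Phi_{L_3}$ and $\Phi_{L_4}$ of the would-be corresponding generators differ by an element of $\operatorname{Im} d$ while staying nonzero in homology. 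I expect this to be the main obstacle: the generators $e_v$ produced by Proposition~\ref{hopfsym} are only well-defined up to a coboundary, so one cannot simply match generators naively — one must fix actual chain-level representatives and exhibit the connecting coboundary explicitly, which is where the matrix computations for the various $d^{(i,j)}$ carry the real content.

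Finally I would handle adding an extra component to the cobordism. Adding a trivial unknot tensors everything with $V$ and changes nothing, so assume the new component is a nontrivial unknot $U=U_1$, for which $\AKh(\rho\coprod\idd)=\AKh(\rho)\otimes\idd_W$. Here I would invoke the recursive formulas for $e_v$ from the proof of Proposition~\ref{hopfsym}: when the new tensor factor is $w_+$ it is carried through by $\alpha_*$ (i.e.\ tensoring with $v_+$), and when it is $w_-$ it is carried by the explicit cycle $e_{v'}\otimes v_- + A\otimes v_-\otimes v_+$; in each case naturality of the Hopf-addition short exact sequence, combined with the squares already established for $\rho$, yields the commuting square for $\rho\coprod\idd$ — these are precisely the two small diagrams displayed just before the statement. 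Assembling the pieces (elementary moves, the four nontrivial cases, and this stabilization step) gives the naturality square of Theorem~\ref{canonicaliso}.
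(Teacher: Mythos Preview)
Your proposal is correct and follows essentially the same approach as the paper: reduce to a single Morse move, dispose of the purely trivial case and cases a--b by direct diagram chasing, handle cases c--d by fixing the four-crossing model $\widetilde{L_4}$ and writing out the reduced Khovanov differentials as explicit $\ZZ$-matrices to exhibit the required coboundaries, and then stabilize by an extra nontrivial component via the two $w_\pm$ diagrams coming from the recursive formula of Proposition~\ref{hopfsym}. Your identification of the main obstacle --- that the $e_v$ are only chain-level representatives, forcing an explicit coboundary computation in cases c and d --- is exactly where the paper's matrix calculations enter.
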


\section{The spectral sequence}\label{sec4}

In this section, we prove Theorem \ref{mainthm} and discuss some examples and applications. To prove Theorem \ref{mainthm}, we choose a link diagram as shown in Figure \ref{standarddgm}. For convenience, we call the strands appearing in the right the \textit{annular strands}.

	\begin{figure}[hbtp]
	\centering
	\begin{tikzpicture}
		\draw[dotted] (0,0) circle [radius=0.175];
		\draw[dotted] (0,0) circle [radius=1.5];
		\begin{knot}[ignore endpoint intersections=false,clip width=5,clip radius=6pt,looseness=1.2]
			\draw[thick] (-1.1,-0.3) rectangle (-0.5, 0.3); \node at (-0.8,0) {$L$};
			
			\strand[thick] (-1,0.3) to[out=up,in=left] (0,1) to[out=right,in=up] (1,0) to[out=down,in=right] (0,-1) to[out=left,in=down] (-1,-0.3);
			
			\draw[densely dotted,thick,scale=0.5] (55:1.3)--(55:2);
			
			\node[scale=0.5] at (45:0.8) {$n$};
			
			\strand[thick] (-0.6,0.3) to[out=up,in=left] (0,0.6) to[out=right,in=up] (0.6,0) to[out=down,in=right] (0,-0.6) to[out=left,in=down] (-0.6,-0.3);
		\end{knot}
		
		\begin{knot}[ignore endpoint intersections=false,clip width=5,clip radius=6pt,looseness=1.2,xshift=5cm]
			\draw[fill=black] (0,0) circle[radius=0.08];
			\strand[thick,black](0,0) to[out=-90,in=180] (0.7,-0.2) to[out=0, in=-90](1.4,0) to[out=90,in=0] (0.7,0.2) to[out=180,in=90](0,0);
			
			\draw[thick] (-1.1,-0.3) rectangle (-0.5, 0.3); \node at (-0.8,0) {$L$};
			
			\strand[thick] (-1,0.3) to[out=up,in=left] (0,1) to[out=right,in=up] (1,0) to[out=down,in=right] (0,-1) to[out=left,in=down] (-1,-0.3);
			
			\draw[densely dotted,thick,scale=0.5] (55:1.3)--(55:2);
			
			\node[scale=0.5] at (45:0.8) {$n$};
			
			\strand[thick] (-0.6,0.3) to[out=up,in=left] (0,0.6) to[out=right,in=up] (0.6,0) to[out=down,in=right] (0,-0.6) to[out=left,in=down] (-0.6,-0.3);
			
			\flipcrossings{2,4};
		\end{knot}
	\end{tikzpicture}
	\caption{A standard link diagram and its augmentation.}
	\label{standarddgm}
\end{figure}
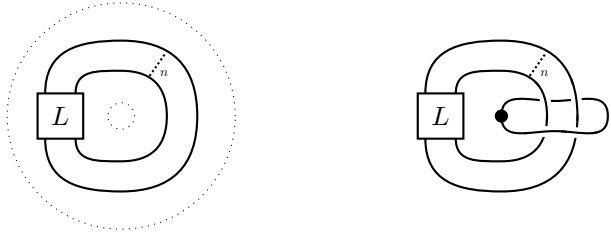

\begin{proof}[Proof of Theorem \ref{mainthm}]
	Fixed a link diagram $D$ as above. Crossings of $\widetilde{L}$ can be classified into two types: crossings of the augmenting circle and the annular strands, and the original crossings of $L$. We encode the resolutions of the first type crossings by $0-1$ sequences $\mathbf{w}_1$ and the second type by $\mathbf{w}_2$. Then the resolution tube of $\widetilde{L}$ can be encoded by the catenation $\bfv=(\mathbf{w}_1,\mathbf{w}_2)$. The differential splits as $d=d_1+d_2$, where $d_i$ corresponds to the changes of smoothing on type $i$ crossings. Denote the partial resolution of $\widetilde{L}$ on $\bfw_2$ by $\widetilde{L}_{\textbf{w}_2}$, which is also the augmentation of the annular unlink $L_{\bfw_2}$ and hence there is no ambiguity.
	
	The chain complex $\CKhr(\widetilde{L})$ is bigraded by $(|\bfw_1|,|\bfw_2|)$ and the spectral sequence of double complexes applies. The $E_1$ term is given by the cohomology of $(\CKhr(\widetilde{L}),d_1)$, which is a chain complex with chain groups $\Khr(\widetilde{L}_{\bfw_2})$ and differentials given by cobordisms. Since the link diagram is fixed, such cobordisms are corresponding to Morse moves. By Theorem \ref{canonicaliso}, the $E_1$ term is isomorphic to the annular resolution of $L$ and hence the $E_2$ term is isomorphic to $\AKh(L)$. The spectral sequence converges to the cohomology of $(\CKhr(\widetilde{L}),d)$, i.e. $\Khr(\widetilde{L})$. 
	
	A Reidemeister move induces an isomorphism between the converging terms that is compatible with the filtration, and an isomorphism between the $E_2$ terms. The comparison theorem then applies and hence the spectral sequence is independent of the choice of the link diagram. This completes the proof.
\end{proof}

\begin{exmp}
	Consider the annular link $L$ shown in Figure \ref{whitehead}. The augmentation $\widetilde{L}$ is isotopic to the link L5a1 and \[\rankk_{\ZZ}\AKh(L)=8=\rankk_{\ZZ}\Khr(\widetilde{L}).\] Hence the spectral sequence collapses at the $E_2$ term. This illustrates that the spectral sequence might be degenerated for link not isotopic to a braid closure.
\end{exmp}

We can derive a finer rank inequality from Theorem \ref{mainthm}.

\begin{cor}
	Denote $L$ and $\widetilde{L}$ as in Theorem \ref{mainthm}. Let $n_0$ be the number of annular strands and $n_-'$ be the number of left-handed crossings on the augmenting circle. Then we have \[\rankk_{\ZZ}\Khr^{n}(\widetilde{L})\le\sum_{n_a+f_a+n_0-n_-'=n}\rankk_{\ZZ}\AKh^{n_a}(L,f_a).\]
\end{cor}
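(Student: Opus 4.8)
The plan is to read off the inequality directly from the spectral sequence constructed in the proof of Theorem \ref{mainthm}, by tracking precisely how the bidegree $(|\bfw_1|,|\bfw_2|)$ filtration on $\CKhr(\widetilde{L})$ interacts with the homological and Alexander gradings on $\AKh(L)$. First I would recall that the $E_2$ page is isomorphic to $\AKh(L)$ and that it converges to $\Khr(\widetilde L)$; since the rank of the abutment is bounded above by the rank of $E_2$ (each successive page is a subquotient), it suffices to identify, for a fixed homological grading $n$ of $\Khr^n(\widetilde L)$, exactly which summands $\AKh^{n_a}(L,f_a)$ of the $E_2$ page contribute to it. The whole content is a bookkeeping of grading shifts.

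The key computation is to express the homological grading on $\CKhr(\widetilde L)$ in terms of the three gradings that survive to $E_2$. On the partial resolution $\widetilde L_{\bfw_2}$ the first-type crossings (those between the augmenting circle and the $n_0$ annular strands) contribute a homological shift of $|\bfw_1|$ before the global shift $[-n_-']$ coming from the left-handed first-type crossings; so on $E_1$ the homological grading decomposes as the internal grading of $\Khr(\widetilde L_{\bfw_2})$ plus $|\bfw_1| - n_-'$, where the $|\bfw_1|$-direction is exactly the $d_1$-direction of the double complex. Taking $d_1$-cohomology and applying Theorem \ref{idtphi}/Theorem \ref{canonicaliso}, the isomorphism $\Phi$ identifies $\Khr^{i,j}(\widetilde L_{\bfw_2})$ with $\AKh^{a,b,k}(L_{\bfw_2})$ via $i = a + k + n_0$ (the ``$n$'' of that theorem is here the number $n_0$ of annular strands, since only the annular components acquire augmenting Hopf-clasps). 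Hence on $E_2 \cong \AKh(L)$ the ambient homological grading $n$ of $\Khr(\widetilde L)$ equals $n_a + f_a + n_0 - n_-'$, where $n_a$ is the homological grading and $f_a$ the Alexander grading of the $\AKh(L)$-class. I would then note that the differentials $d_r$ on all pages preserve this ambient grading $n$, so $\Khr^n(\widetilde L)$ receives contributions only from those $E_2$-summands with $n_a + f_a + n_0 - n_-' = n$, giving
\[
\rankk_{\ZZ}\Khr^{n}(\widetilde L)\;\le\;\dimm_{\QQ}\bigl(E_2^{\,n}\otimes\QQ\bigr)\;=\;\sum_{n_a+f_a+n_0-n_-'=n}\rankk_{\ZZ}\AKh^{n_a}(L,f_a).
\]

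The main obstacle — really the only nontrivial point — is pinning down the grading dictionary exactly: one must be careful that the ``$|\bfw_1|$'' direction of the double complex is precisely the direction along which $\Phi$ converts quantum/homological data of the augmenting Hopf clasps into the Alexander grading $f_a$, and that the global shifts ($[-n_-']\{\cdots\}$ from the augmenting crossings versus the shifts internal to $L$) are allocated to the right side of the equation. Concretely I would verify the shift on a single annular strand by comparing with Proposition \ref{hopfadd}: adding one augmenting clasp sends $\Khr^{i-1,j-2}$ and $\Khr^{i+1,j+2}$ into $\Khr(\cdot\#H)$, and under $\Phi$ the two summands $w_\pm$ of $W$ (Alexander degrees $\pm1$) land in homological degrees differing by $2$, which is exactly the $f_a \mapsto f_a$-shifts-homological-degree-by-$f_a$ behavior built into the exponent $i+k+n$ of Theorem \ref{idtphi}. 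Once the single-strand case is checked, the general case follows by the same induction already carried out for Theorem \ref{idtphi}, and the corollary is immediate. I would also remark that the same argument refines to a bigraded statement if one keeps track of the quantum grading, but only the homological version is recorded here.
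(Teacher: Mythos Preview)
Your proposal is correct and follows essentially the same route as the paper: both arguments read the inequality off the spectral sequence of Theorem \ref{mainthm} by computing how the total homological grading $n=|\bfw_1|+|\bfw_2|-\widetilde n_-$ on $\CKhr(\widetilde L)$ decomposes on the $E_2$ page into $n_a+f_a+n_0-n_-'$, using the grading shift in Theorem \ref{idtphi}/\ref{canonicaliso} to identify $f_a$ with $|\bfw_1|-n_0$. The paper phrases this via the combinatorics of the symmetric resolutions (writing $f_a=(n_0')_+-n_0'=|\bfw_1|-n_0$ directly), while you invoke the formula $i=a+k+n$ from Theorem \ref{idtphi}; these are the same computation.
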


\begin{proof}
	Denote the gradings of $\AKh(L)$ by $(n_a,q_a,f_a)$. Let $\widetilde{n}_-$ be the number of left-handed crossings of $\widetilde{L}$. Then $\widetilde{n}_-=n_-+n_-'$, and $n_a=|\bfw_2|-n_-$. Let $n_0'$ be the number of nontrivial unknot components of a specific partial resolution. Let $(n_0')_+$ (resp. $(n_0')_-$) be the number of $1$-smoothings (resp. $0$-smoothings). Then by Theorem \ref{canonicaliso}, we have \[f_a=(n_0')_+-n_0'=\frac{(n_0')_+-(n_0')_-}{2}=|\bfw_1|-n_0\] on the $E_1$ term. On the $E_\infty$ term, we have $n=|\bfw_1|+|\bfw_2|-\widetilde{n}_-$. Therefore, from Theorem \ref{mainthm}, we obtain \begin{align*}
		\rankk_{\ZZ}\Khr^n(\widetilde{L})&=\sum_{|\bfw_1|+|\bfw_2|-\widetilde{n}_-=n}\rankk_{\ZZ}E_\infty^{|\bfw_1|,|\bfw_2|}\\
		&\le \sum_{|\bfw_1|+|\bfw_2|-\widetilde{n}_-=n}\rankk_{\ZZ}E_2^{|\bfw_1|,|\bfw_2|}\\
		&=\sum_{n_a+f_a+n_0-n_-'=n}\rankk_{\ZZ}\AKh^{n_a}(L,f_a).
	\end{align*}

\end{proof}

We now prove Theorem \ref{classify} and Corollary \ref{detect}. The following simple observation is useful.

\begin{lem}\label{1strand}
		Let $L$ be an annular link with a link diagram such that there is only one annular strand. View $L$ as a link in $S^3$ and let $p$ be a base point on this annular strand. Then $\AKh(L)$ is supported on $f=\pm 1$, and we have \[\AKh(L,\pm 1)\cong\Khr(L,p).\]
\end{lem}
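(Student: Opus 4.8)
The plan is to exploit the single annular strand to decompose the annular Khovanov complex according to whether the unique nontrivial circle in each resolution is labelled $w_+$ or $w_-$. Since there is exactly one annular strand in the diagram $D$, every resolution $D_\bfv$ contains exactly one nontrivial circle (the one carrying the nontrivial homology class of the core of $A$): the annular strand cannot be split off into a trivial circle by any smoothing because no crossing involves two distinct annular strands. Consequently the only cobordism types that occur are "two trivial circles merge/one splits into two trivial" (treated by the original TQFT) and "a trivial circle merges with the nontrivial circle / the nontrivial circle splits off a trivial circle," governed by $v_+\otimes w_\pm\mapsto w_\pm$, $v_-\otimes w_\pm\mapsto 0$, and $w_\pm\mapsto v_-\otimes w_\pm$. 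In particular the nontrivial circle's label $w_+$ or $w_-$ is \emph{never changed} by a differential, and the $f$-grading takes only the values $\pm 1$ on the whole complex.

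Next I would write $\CKhr$ (equivalently, since $L\subset S^3$, the unreduced annular complex — but here we are comparing to the reduced Khovanov homology $\Khr(L,p)$ with $p$ on the annular strand, so I treat the pointed circle = nontrivial circle throughout) as a direct sum $C_{+}\oplus C_{-}$ of the subcomplexes where the nontrivial circle carries $w_+$, resp. $w_-$. These are genuinely subcomplexes (not just subquotients) precisely because the label is preserved and the "merge" map $v_-\otimes w_\pm\mapsto 0$ kills nothing that would land in the other summand. Now compare $C_{\pm}$ to the reduced Khovanov complex $\CKhr(L,p)$: the reduced complex is the subcomplex of $\operatorname{CKh}(L)$ spanned by states labelling the $p$-circle (= the annular strand's circle in every resolution) by $v_-$. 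Under the dictionary $w_\pm\leftrightarrow v_-$ on the distinguished circle and $v_\pm\leftrightarrow v_\pm$ on all other (trivial) circles, the annular differentials listed above become exactly the reduced Khovanov differentials: $v_+\otimes w_\pm\mapsto w_\pm$ corresponds to $v_+\otimes v_-\mapsto v_-$ (the reduced merge), $v_-\otimes w_\pm\mapsto 0$ corresponds to $v_-\otimes v_-\mapsto 0$ (which is indeed $0$ in the reduced complex since $v_-\otimes v_-$ is not a reduced state — wait, it is; rather $m(v_-\otimes v_-)=0$ already in the unreduced TQFT), and $w_\pm\mapsto v_-\otimes w_\pm$ corresponds to $\Delta(v_-)=v_-\otimes v_-$, the reduced comultiplication. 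Hence each of $C_+$ and $C_-$ is isomorphic as a complex to $\CKhr(L,p)$, up to an overall grading shift determined by bookkeeping of the $q$- and homological shifts (which agree for the two theories since the crossing data $n_\pm$ are the same). Taking cohomology gives $\AKh(L,\pm 1)\cong\Khr(L,p)$, and the support statement follows from the $f=\pm1$ observation above.

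The step I expect to require the most care is verifying that the grading shifts match up exactly, i.e. that no spurious $q$-shift is introduced by the identification $w_\pm\leftrightarrow v_-$ on the distinguished circle. Since $q\deg w_\pm=\pm1$ while $q\deg v_-=-1$, the summand $C_+$ needs a uniform $q$-shift by $+2$ relative to the reduced complex (each state in $C_+$ has its distinguished-circle label contributing $+1$ rather than $-1$), whereas $C_-$ matches on the nose; both global shifts $[-n_-]\{n_+-2n_-\}$ are identical for $\AKh$ and $\Khr$ since they depend only on the diagram. So the clean statement is $\AKh(L,+1)\cong\Khr(L,p)\{2\}$ and $\AKh(L,-1)\cong\Khr(L,p)$ as bigraded groups, which is what the lemma asserts up to the (suppressed) quantum grading; I would state it that way or absorb the shift into the convention, matching whatever normalization the earlier sections use. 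Apart from this bookkeeping, the proof is the structural observation that a single annular strand forces a direct-sum decomposition of the complex into two copies of the reduced complex.
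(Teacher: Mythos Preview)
Your proposal is correct and follows essentially the same approach as the paper: observe that a single annular strand forces exactly one nontrivial circle in every resolution, so the complex is supported on $f=\pm 1$ and splits as $C_+\oplus C_-$, and then identify each summand with $\CKhr(L,p)$ via $w_\pm\leftrightarrow v_-$. Your discussion of the $q$-grading shift for $C_+$ is more careful than the paper's own proof, which suppresses this point (consistent with the lemma statement, which does not assert anything about the quantum grading).
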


\begin{proof}
	There is exactly one nontrivial circle in each resolution of $L$, which is the circle containing $p$. Hence the chain complex is supported on $f=\pm 1$. Furthermore, the subcomplexes of $f$-grading $\pm 1$ are isomorphic to $\operatorname{CKhr}(L)$ by replacing the generators $w_\pm$ of the nontrivial circle by $v_-$, respectively.
\end{proof}

\begin{proof}[Proof of Theorem \ref{classify}]
	Let $G$ be a forest such that each connected component contains at most one annular vertex. Then $L_G$ is a disjoint union of links with at most one annular strand. Then Lemma \ref{1strand} applies and we have $\rankk_{\ZZ}\AKh(L_G)=2^n$ by K\"unneth formula.
	
	Conversely, let $L$ be an annular link with $n$ components and \[\rankk_{\ZZ}\AKh(L)=2^n.\] Then Corollary \ref{rankineq} gives $\rankk_{\ZZ}\Khr(\widetilde{L})=2^n$. By \cite[Theorem 1.2]{XZ19b}, $\widetilde{L}$ is a forest of unknot in $S^3$. Therefore, $L$ is a forest of unknot in $A\times I$. Denote their corresponding forests by $\widetilde{G}$ and $G$ respectively. Notice that $\widetilde{G}$ is constructed from $G$ by adding a vertex adjacent to all the annular vertices. Two annular vertices cannot lie in the same connected component of $G$ since otherwise a cycle would occur in $\widetilde{G}$, which is absurd since $\widetilde{G}$ is a forest.
\end{proof}

\begin{proof}[Proof of Corollary \ref{detect}]
	By Theorem \ref{classify}, $L$ is a forest of unknots in $A\times I$. Denote the corresponding forest by $G$. If $G$ has an edge, then $\AKh(L)$ would not be supported on $t=0$ as $\AKh(U)$ does (see the discussion in Section 3.1), which is a contradiction. Hence every vertex is an independent connected component of $G$, i.e. $L$ is an annular unlink. The number of nontrivial unknot components in $L$ can be read from the Poincar\'e polynomial of $L$. Therefore $L$ is isotopic to $U$. 
\end{proof}

\bibliographystyle{hplain}
\bibliography{ref}

\end{document}